\theoremstyle{plain}
\newtheorem{theorem}{Theorem}
\newtheorem{lemma}[theorem]{Lemma}
\newtheorem{proposition}[theorem]{Proposition}
\theoremstyle{definition}
\newtheorem{conjecture}[theorem]{Conjecture}
\theoremstyle{remark}
\newtheorem{remark}[theorem]{Remark}
\newcommand{\order}[1]{\ensuremath{\left\lvert#1\right\rvert}}
\newcommand{\mob}{M\"{o}bius }
\newcommand{\mobfn}[2]{\mu[#1,#2]}
\newcommand{\mobp}[1]{\mu[#1]} 
\newcommand{\emptyperm}{\epsilon}
\DeclareMathOperator{\cl}{Cl}
\DeclareMathOperator{\cv}{Cover}
\DeclareMathOperator{\gr}{Gr}
\newcommand{\fullcover}[1]{\cl(#1)}
\newcommand{\xfullcover}[2]{\cl_{#1}(#2)}
\newcommand{\cover}[1]{\cv(#1)}
\newcommand{\ground}[1]{\gr(#1)}
\newcommand{\xground}[2]{\gr_{#1}(#2)}
\newcommand{\e}{\mathrm{e}}
\newcommand{\inflateall}[2]{#1[#2]}
\newcommand{\inflatesome}[3]{#1_{#2}[#3]}
\newcommand{\permsetall}[2]{#1\langle#2\rangle}
\newcommand{\permsetsome}[3]{#1_{#2}\langle#3\rangle}
\newcommand{\permsetfixed}[3]{#1_{#2}\langle\langle{#3}\rangle\rangle}
\newcommand{\stronglyzeroset}{\mathcal{SZ}}
\newcommand{\dnode}[3]{\node (p#1) at (#2,#3)  {\scriptsize{#1}}}
\newcommand{\cnode}[5]{%
    \node (p#1) [#5, align=center] at (#2,#3)%
        {\scriptsize{#1 #4}%
        };%
}
\newcommand{\dline}[2]{
    \foreach \i in {#2} {
        \draw [->] (p#1)  -- (p\i);
    };
}
\newcommand{\zpmfr}{0.3995}
\def\anonfootnote{\xdef\@thefnmark{}\@footnotetext}
\title{Intervals of permutations and the principal \mob function}
\author{
    Robert Brignall, 
    David Marchant
    \\
    \\
    \textit{School of Mathematics and Statistics}\\[-3pt]
    \textit{The Open University, Milton Keynes, MK7 6AA, UK}\\[10pt]
}
\begin{document}
\maketitle
\anonfootnote{\textit{Email:} \{robert.brignall; david.marchant\}@open.ac.uk}

\begin{abstract}  
    We show that the proportion of permutations
    of length $n$
    with principal \mob function equal to zero, $Z(n)$,
    is asymptotically bounded below by \zpmfr.
    
    If a permutation $\pi$ contains two 
    intervals of length 2,
    where one interval is an ascent and the other a descent,
    then we show that the value of the 
    principal \mob function $\mobfn{1}{\pi}$
    is zero, and we use this 
    result to find the lower bound
    for $Z(n)$.  
         
    We also show that if a permutation $\phi$
    has certain properties, then
    any permutation $\pi$ which contains
    an interval order-isomorphic to $\phi$
    has $\mobfn{1}{\pi} = 0$.    
    
    An expanded version of this paper, with two additional authors,
    is available at \url{https://arxiv.org/abs/1810.05449}.
\end{abstract}

\section{Introduction}

Let $\sigma$ and $\pi$ be permutations of natural numbers.
We say that $\pi$ \emph{contains} $\sigma$ if
there is a sub-sequence of points of $\pi$
that is order-isomorphic to $\sigma$.
As an example, $3624715$ contains $3142$
as the sub-sequences $6275$ and $6475$. 
If $\sigma$ is contained in $\pi$,
then we write $\sigma \leq \pi$.

The set of all permutations is a poset
under the partial order given by containment.
A closed interval $[\sigma, \pi]$ in a poset 
is the set defined by 
$\{\tau : \sigma \leq \tau \leq \pi\}$,
and a half-open interval $[\sigma, \pi)$
is the set $\{\tau : \sigma \leq \tau < \pi\}$.
The \mob function is defined recursively 
on an interval of a poset $[\sigma, \pi]$ as:
\[
\mobfn{\sigma}{\pi} 
=
\left\lbrace
\begin{array}{lll}
0 & \quad & \text{if $\sigma \not\leq \pi$}, \\
1 & \quad & \text{if $\sigma = \pi$}, \\
- \sum\limits_{\tau \in [\sigma, \pi)} \mobfn{\sigma}{\tau} & \quad & \text{otherwise.}
\end{array}
\right.
\]
From the definition of the \mob function,
it follows that 
if $\sigma < \pi$, then
$\sum_{\tau \in [\sigma, \pi]} \mobp{\sigma, \tau} = 0$.

In this paper we are mainly concerned with the 
\emph{principal \mob function} 
of a permutation $\pi$, written $\mobp{\pi}$,
where $\mobp{\pi} = \mobfn{1}{\pi}$.
We consider cases where the value
of the principal \mob function $\mobp{\pi}$
can be determined by examining small localities
of $\pi$.  In this section,
we use ``locally zero'' to describe
results of this form.
In Section~\ref{section-permutations-containing-a-specific-interval}
we define ``strongly zero'' permutations
which share some characteristics
with locally zero permutations.
We show that, asymptotically,
the proportion of permutations
where the principal \mob function
is zero
is bounded below by \zpmfr.
We also show that our results for the 
principal \mob function can be extended 
to intervals where the lower bound is not $1$.

The question of the \mob function in 
the permutation poset
was first raised by
Wilf~\cite{Wilf2002}.  
The first result was by Sagan and Vatter~\cite{Sagan2006}, 
who determined the \mob function 
on intervals of layered permutations.
Steingr\'{\i}msson and Tenner~\cite{Steingrimsson2010} found  
pairs of permutations $(\sigma, \pi)$ 
where $\mobfn{\sigma}{\pi} = 0$.

Burstein, Jel{\'{i}}nek, Jel{\'{i}}nkov{\'{a}} 
and Steingr{\'{i}}msson~\cite{Burstein2011} found
a recursion for the \mob function
for sum and skew decomposable permutations.
They used this to determine
the \mob function for separable permutations.
Their results 
for sum and skew decomposable permutations
implicitly include the first locally zero result, 
which is that, up to symmetry, 
if a permutation $\pi$ with length greater than two begins $12$,
then $\mobp{\pi} = 0$.

Smith~\cite{Smith2013},
found an explicit formula for the \mob function on the interval
$[1, \pi]$ for all permutations $\pi$ with a single descent.
Smith's paper includes a lemma, 
reproduced here as Lemma~\ref{Smith-lemma-triple-adjacencies},
which is that if a permutation $\pi$
contains an interval order-isomorphic to
$123$, then $\mobp{\pi}=0$.
This is the second instance in the literature
of a locally zero result.
The result in~\cite{Burstein2011} requires  
that the permutation starts with a particular sequence.
Smith's result is, in some sense, more general,
as the critical interval (123)
can occur in any position.
We will see later that this lemma
is the first instance of a
strongly zero result.

Smith~\cite{Smith2016}, 
has explicit expressions for the 
\mob function $\mobfn{\sigma}{\pi}$
when $\sigma$ and $\pi$ have the same number of descents.
In~\cite{Smith2016a}, Smith found
an expression that determines the
\mob function for all intervals in
the poset, although the expression
involves a rather complicated
double sum, which includes
$\sum_{\tau \in [\sigma, \pi)} \mobfn{\sigma}{\tau}$.

Brignall and Marchant~\cite{Brignall2017a}
showed that if the lower bound of an interval is indecomposable,
then the \mob function depends only on the indecomposable permutations 
contained in the upper bound, 
and used this result to find a fast polynomial algorithm
for finding $\mobp{\pi}$ where $\pi$
is an increasing oscillation.

\section{Definitions and notation}

An \emph{interval} of a permutation $\pi$ 
is a non-empty set of contiguous indices $i, i+1, \ldots, j$
where the set of values $\{\pi_i, \pi_{i+1}, \ldots, \pi_j\}$
is also contiguous.  
A permutation of length $n$ that only has intervals
of length $1$ or $n$ is a \emph{simple} permutation.
As an example, $419725386$ is a simple permutation.
This is shown in Figure~\ref{figure-example-simple-oppadj}.

An \emph{adjacency} in a permutation is an interval of length two.
If a permutation contains a monotonic interval 
of length three or more, then each sub-interval 
of length two is an adjacency.
As examples, $367249815$ has two adjacencies, $67$ and $98$;
and $1432$ also has two adjacencies, $43$ and $32$.
If an adjacency is ascending, then it is an 
\emph{up-adjacency}, otherwise it is a 
\emph{down-adjacency}.

If a permutation $\pi$ contains at least one
up-adjacency, and at least one down-adjacency,
then we say that $\pi$ has \emph{opposing adjacencies}.
An example of a permutation with
opposing adjacencies is $367249815$,
which is shown in Figure~\ref{figure-example-simple-oppadj}.
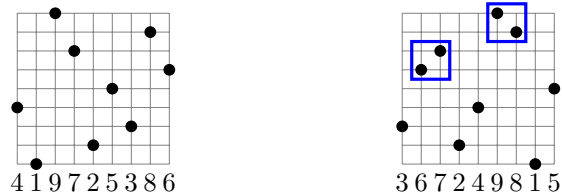
\begin{figure}[!h]
    \begin{center}
        \begin{subfigure}[t]{0.35\textwidth}
            \begin{center}
                \begin{tikzpicture}[scale=0.25]
                \draw[step=1cm,gray,very thin] (1,1) grid (9,9);
                \foreach \y [count=\x] in {4,1,9,7,2,5,3,8,6}
                {
                    \node [circle, draw, fill=black,inner sep=0pt, minimum width=4pt]
                    (\y) at (\x,\y) {};
                    \node[fill=none,draw=none] at (\x,0) {\y};
                }
                \end{tikzpicture}
            \end{center}
        \end{subfigure}
        \qquad      
        \begin{subfigure}[t]{0.35\textwidth}
            \begin{center}
                \begin{tikzpicture}[scale=0.25]
                \draw[step=1cm,gray,very thin] (1,1) grid (9,9);
                \foreach \y [count=\x] in {3,6,7,2,4,9,8,1,5}
                {
                    \node [circle, draw, fill=black,inner sep=0pt, minimum width=4pt]
                    (\y) at (\x,\y) {};
                    \node[fill=none,draw=none] at (\x,0) {\y};
                }
                \draw [color=blue, very thick] (1.5, 5.5) rectangle (3.5, 7.5);
                \draw [color=blue, very thick] (5.5, 7.5) rectangle (7.5, 9.5);    
                \end{tikzpicture}
            \end{center}
        \end{subfigure}
    \end{center}%
    \caption{A simple permutation and a permutation with opposing adjacencies.}
    \label{figure-example-simple-oppadj}
\end{figure}

A \emph{triple adjacency} in a permutation
is an interval of length three that is 
monotonic; that is, the interval is
order-isomorphic to $123$ or $321$.

A permutation that does not 
contain any adjacencies is
\emph{adjacency-free}.
Some early papers use the term ``strongly irreducible''
for what we call adjacency-free permutations.  
See, for example, Atkinson and Stitt~\cite{Atkinson2002}.

Given a permutation $\sigma$ with length $n$, and
permutations $\alpha_1, \ldots, \alpha_n$,
where at least one of the $\alpha_i$-s
is not the empty permutation, $\emptyperm$,
the \emph{inflation} of $\sigma$ by $\alpha_1, \ldots, \alpha_n$
is
the permutation found by 
removing the point $\sigma_i$
if $\alpha_i = \emptyperm$, and replacing $\sigma_i$
with an interval isomorphic to $\alpha_i$ otherwise.
Note that this is slightly different to the standard
definition of inflation, 
which does not allow inflation by the empty permutation.
We write inflations 
as
$\inflateall{\sigma}{\alpha_1, \ldots, \alpha_n}$.
As examples,
$\inflateall{3624715}{1,12,1,1,21,1,1}=367249815$,
and
$\inflateall{3624715}{\emptyperm,1,1,\emptyperm,1,\emptyperm,1}=3142$.
A \emph{proper inflation} is an inflation
$\inflateall{\sigma}{\alpha_1, \ldots, \alpha_n}$
where none of the  $\alpha_i$-s are the empty permutation.

In many cases we will be interested 
in permutations where most positions
are inflated by the singleton permutation $1$.
If $\sigma = 3624715$,
then 
we will write
$\inflateall{\sigma}{1,12,1,1,21,1,1} = 367249815$
as 
$\inflatesome{\sigma}{2,5}{12,21}$.
Formally, 
$\inflatesome{\sigma}{i_1, \ldots, i_k}{\alpha_1, \ldots, \alpha_k}$
is the inflation of $\sigma$, 
where $\sigma_{i_j}$ is inflated by $\alpha_{j}$ for
$j = 1, \ldots , k$; and all other positions of $\sigma$
are inflated by $1$.
We note here that inflations are not necessarily unique.
This is in contrast to the standard definition,
originally given in 
Albert and Atkinson~\cite{Albert2005},
where, essentially,
we have that every permutation
can be written as 
a unique inflation of a simple permutation.

If $\alpha$ is a permutation, then
the \emph{closure} of $\alpha$
is the set of permutations contained in $\alpha$,
including $\alpha$ itself and $\emptyperm$,
which we write as $\fullcover{\alpha}$.
If we have a permutation $\sigma$ of length $n$,
and permutations 
$\alpha_1, \ldots, \alpha_n$, then the
\emph{inflation set} 
$\permsetall{\sigma}{\alpha_1, \ldots, \alpha_n}$
is the set of all possible permutations that are inflations of 
$\sigma$, where each $\sigma_i$ is inflated
by an element of $\fullcover{\alpha_i}$.
In line with our definition of inflation,
we assume throughout this paper
that at least one of the permutations
chosen from 
$\fullcover{\alpha_1}, \ldots, \fullcover{\alpha_n}$
is non-empty, thus the set 
$\permsetall{\sigma}{\alpha_1, \ldots, \alpha_n}$
does not include the empty permutation.

We will mainly be 
discussing inflation sets where
most positions are inflated
by $\fullcover{1} = \{1, \emptyperm\}$.
We use the same style of notation that we use 
for inflations,
indicating the positions that are not inflated by 
an element of $\fullcover{1}$ as subscripts,
so, for example, we may write
$\permsetsome{\sigma}{\ell,r}{312,132}$
for the inflation set of $\sigma$,
where the $\ell$-th position 
is inflated by an element of $\fullcover{312}$,
the $r$-th position is inflated by 
an element of $\fullcover{132}$,
and all other positions are 
inflated by an element of $\fullcover{1}$.

Our argument includes discussing sets of permutations
that are an inflation of some $\sigma$,
where one position is inflated
by a specific permutation, and all other
positions are inflated by an element of $\fullcover{1}$.
The permutations in these sets are used as witnesses
to the presence of the specific permutation.
If we are inflating with the
specific permutation $\alpha$, 
then we
write the set of permutations as
$\permsetfixed{\sigma}{\ell}{\alpha}$,
where the permutation $\alpha$ inflates the $\ell$-th position,
and all other positions are inflated by 
an element of $\fullcover{1}$.

\section{Permutations with opposing adjacencies}

In this section our main result is to show that
if a permutation has opposing adjacencies, then
the value of the principal \mob function is zero.
We then show that if $\sigma$ is adjacency-free,
and $\pi$ contains an interval
order-isomorphic to a symmetry of $1243$, then
$\mobfn{\sigma}{\pi} = 0$.

We use an inductive proof.  The first,
rather trivial, step is to show that the base case holds.
We then consider some $\pi$ that has opposing adjacencies.
and divide the poset $[1, \pi)$
into four sets 
$L$,
$R$,
$L \cap R$ and 
$T = [1, \pi) \setminus (L \cup R )$,
and show that we can obtain
$\mobp{\pi}$ 
by summing over each set and then using inclusion-exclusion.
We show an example of the sets for $\pi = 346215$ 
in Figure~\ref{figure-PMF-sets}.
\begin{figure}[h!]
    \begin{center}
        \begin{tikzpicture}[xscale=1.7,yscale=1.3]
        \dnode{346215}{3.0}{5};
        \dnode{23514}{1.0}{4};        
        \dnode{34521}{2.5}{4};        
        \dnode{34215}{3.5}{4};        
        \dnode{35214}{5.0}{4};        
        \dnode{1243}{0.0}{3};
        \dnode{2341}{1.0}{3};
        \dnode{2314}{2.0}{3};
        \dnode{2413}{3.0}{3};
        \dnode{3421}{4.0}{3};
        \dnode{3214}{5.0}{3};
        \dnode{4213}{6.0}{3};
        \dnode{123}{0.5}{2};
        \dnode{132}{1.5}{2};
        \dnode{231}{2.5}{2};
        \dnode{213}{3.5}{2};
        \dnode{312}{4.5}{2};
        \dnode{321}{5.5}{2};
        \dnode{12}{2.5}{1};
        \dnode{21}{3.5}{1};
        \dnode{1}{3.0}{0};
        \dline{346215}{23514,34521,34215,35214};
        \dline{23514}{1243,2341,2314,2413};
        \dline{34521}{2341,3421};
        \dline{34215}{2314,3421,3214};
        \dline{35214}{2413,3421,3214,4213};
        \dline{1243}{123,132};
        \dline{2341}{123,231};
        \dline{2314}{123,231,213};
        \dline{2413}{132,231,213,312};
        \dline{3421}{231,321};
        \dline{3214}{213,321};
        \dline{4213}{213,312,321};
        \dline{123}{12};
        \dline{132}{12,21};
        \dline{213}{12,21};
        \dline{231}{12,21};
        \dline{312}{12,21};
        \dline{321}{21};
        \dline{12}{1};
        \dline{21}{1};
        \draw [thick, draw=red, fill=red, fill opacity=0.05] 
        plot [smooth cycle] coordinates {
            ( 1.0,  4.2) 
            (-0.3,  3.0)
            ( 0.2,  2.0)
            ( 2.9, -0.2)
            ( 4.7,  2.1)
        };
        \draw [thick, draw=blue, fill=blue, fill opacity=0.05] 
        plot [smooth cycle] coordinates {
            ( 5.0,  4.2) 
            ( 6.3,  3.0)
            ( 5.8,  2.0)
            ( 3.1, -0.2)
            ( 1.3,  2.1)
        };
        \draw [thick, draw=cyan, fill=cyan, fill opacity=0.05] 
        plot [smooth cycle] coordinates {
            (3,4.3) 
            (2,4) 
            (3,3.7) 
            (4,4)
        };
        \node (L) [red] at (0.5 ,0.5) {$L$};
        \draw [-triangle 45, red, dashed] (L) -- (1.1,1.1);
        \node (R) [blue] at (5.5, 0.5) {$R$};
        \draw [-triangle 45, blue, dashed] (R) -- (4.9,1.1);
        \node (T) [cyan] at (1.5,4.75) {$T$};
        \draw [-triangle 45, cyan, dashed] (T) -- (2.3,4.15);
        \end{tikzpicture}
    \end{center}
    \caption{Four subsets of $[1, 346215)$:
        $L$ in red, $R$ in blue, $L \cap R$, and $T$ in cyan.}
    \label{figure-PMF-sets}
\end{figure}
\subsection{The principal \mob function of permutations with an opposing adjacency}

Our main theorem in this section is:
\begin{theorem}
    \label{theorem-PMF-opposing-adjacencies}
    If $\pi$ has opposing adjacencies, then
    $\mobp{\pi} = 0$.
\end{theorem}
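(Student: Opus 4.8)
The plan is to induct on the length $n = \order{\pi}$, using the recursive definition of the \mob function to reduce the statement to showing that $\sum_{\tau \in [1,\pi)} \mobfn{1}{\tau} = 0$. Since $\pi$ has opposing adjacencies, fix an up-adjacency $U$ and a down-adjacency $D$. A short check comparing the value/position constraints of an ascending and a descending interval of length two shows that $U$ and $D$ cannot share a point, so they involve four distinct points. Collapsing an adjacency --- deleting either of its two points, which yields the same pattern because the adjacency is an interval --- produces three distinguished patterns: $c_L$ (collapse $D$) and $c_R$ (collapse $U$), both of length $n-1$ and hence covers of $\pi$, together with $c_{LR}$ (collapse both), of length $n-2$, satisfying $c_{LR} \le c_L, c_R \le \pi$.

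Following the figure, I would set $L = [1,c_L]$, $R = [1,c_R]$ and $T = [1,\pi)\setminus(L\cup R)$, so that inclusion--exclusion gives
\[
\sum_{\tau\in[1,\pi)}\mobfn{1}{\tau}
= \sum_{\tau\in L}\mobfn{1}{\tau} + \sum_{\tau\in R}\mobfn{1}{\tau} - \sum_{\tau\in L\cap R}\mobfn{1}{\tau} + \sum_{\tau\in T}\mobfn{1}{\tau}.
\]
The first two sums vanish at once: each runs over a full closed interval $[1,c]$ with $1 < c$, which sums to $0$ by the defining property of the \mob function recorded in the introduction. The same property kills the third sum, provided I can establish that $L\cap R$ is itself such an interval, namely that $L\cap R = [1,c_{LR}]$.

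For $T$ I would argue by induction. If $\tau\in T$ then $\tau\not\le c_R$; since $c_R$ is $\pi$ with one point of $U$ deleted, every embedding of $\tau$ into $\pi$ must then use both points of $U$, and because $U$ is an interval their images form an up-adjacency in $\tau$. Symmetrically $\tau\not\le c_L$ forces a down-adjacency in $\tau$. Hence every element of $T$ has opposing adjacencies and length strictly less than $n$, so $\mobfn{1}{\tau} = 0$ by the inductive hypothesis and $\sum_{\tau\in T}\mobfn{1}{\tau} = 0$. The base case is covered automatically: no permutation of length below $4$ has opposing adjacencies, and for $n=4$ one finds $T = \emptyset$.

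The main obstacle is the identity $L\cap R = [1,c_{LR}]$, equivalently $\fullcover{c_L}\cap\fullcover{c_R} = \fullcover{c_{LR}}$. The inclusion $\supseteq$ is immediate from $c_{LR}\le c_L$ and $c_{LR}\le c_R$. The reverse inclusion is delicate, and it genuinely relies on the two adjacencies having \emph{opposite} orientations: for two up-adjacencies the analogue fails, as in $1234$, where collapsing either ascending pair gives $123$ and $\fullcover{123}\cap\fullcover{123}=\fullcover{123}\neq\fullcover{12}$. Translating containment into embeddings, $\tau\le c_R$ (respectively $\tau\le c_L$) says precisely that $\tau$ embeds into $\pi$ using at most one point of $U$ (respectively of $D$), while $\tau\le c_{LR}$ demands a single embedding economical in both blocks simultaneously. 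I would prove that the two given embeddings can always be merged into one such economical embedding by a local rerouting argument: an embedding using both points of $U$ realises an up-adjacency of $\tau$ that, $U$ being an interval, cannot be re-routed through the descending block $D$, so the ascending and descending constraints never compete for the same points. Making this exchange precise --- ruling out that economising on $U$ forces the use of both points of $D$, and vice versa --- is where the real work lies, and is exactly the step in which the opposite orientations of $U$ and $D$ are indispensable.
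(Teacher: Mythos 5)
Your decomposition of $[1,\pi)$ into $L$, $R$, $L\cap R$ and $T$, the induction, and the treatment of $L$, $R$ and $T$ all match the paper's argument. However, the step you defer as ``where the real work lies'' --- the identity $L\cap R=[1,c_{LR}]$, i.e.\ $\fullcover{c_L}\cap\fullcover{c_R}=\fullcover{c_{LR}}$ --- is not merely delicate: it is false, even with the adjacencies oppositely oriented. The paper's own counterexample is $\pi=53128746$, with up-adjacency $12$ and down-adjacency $87$. Here $c_L=5312746$, $c_R=4217635$ and $c_{LR}=421635$, and one checks that $4312\le c_L$ (via the subsequence $5312$) and $4312\le c_R$ (via $7635$), yet $4312\not\le c_{LR}$. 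So $4312\in L\cap R\setminus[1,c_{LR}]$, and no rerouting argument can rescue the claimed equality. Intuitively, a permutation can embed into $c_L$ using the up-adjacency and into $c_R$ using the down-adjacency, with no single embedding that is economical in both blocks.

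The repair is to stop insisting that $L\cap R$ be a closed interval. Write $L\cap R=[1,c_{LR}]\,\sqcup\,G_x$ where $G_x=(L\cap R)\setminus[1,c_{LR}]$. The sum over $[1,c_{LR}]$ vanishes by the defining property of the \mob function, exactly as for $L$ and $R$. For $G_x$ you already have the needed tool: it is the same argument you use for $T$. If $\tau\in L\setminus[1,c_{LR}]$ then every embedding of $\tau$ into $c_L$ must use both points of the surviving up-adjacency (otherwise $\tau\le c_{LR}$), so $\tau$ contains an up-adjacency; symmetrically $\tau\in R\setminus[1,c_{LR}]$ forces a down-adjacency. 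Hence every $\tau\in G_x$ has opposing adjacencies and $\mobp{\tau}=0$ by the inductive hypothesis. With this substitution (and a direct check that $\mobp{1243}=0$ for the base case, rather than relying on $T=\emptyset$), your proof becomes the paper's proof.
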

\begin{proof}
    Note first that if $\pi$ has
    opposing adjacencies, then it must have
    length at least four.  It is simple
    to see that all permutations of
    length four with opposing adjacencies
    are symmetries of $1243$, and that 
    $\mobp{1243} = 0$.
    
    Assume now that 
    Theorem~\ref{theorem-PMF-opposing-adjacencies}
    applies to all permutations with length less than some $n > 4$.    
    Let $\pi$ be a permutation of length $n$ 
    with opposing adjacencies.
    Choose an up-adjacency and a down-adjacency.  
    Without loss of generality, 
    we can assume, by symmetry, that the first adjacency chosen 
    is an up-adjacency.
    
    Let $\gamma$ be the permutation formed
    by replacing the two chosen adjacencies in $\pi$
    by $1$, and retaining all other points of $\pi$.
    Then we can write 
    $
    \pi = \inflatesome{\gamma}{\ell,r}{12,21}
    $,
    where $\ell$ is the index of the first point
    of the first adjacency chosen in $\pi$,
    and 
    $r$ is one less than the index 
    of the first point of the second adjacency chosen in $\pi$.  
    As an example, with $\pi=3\mathbf{67}24\mathbf{98}15$
    we would have $\ell=2$, $r=5$,
    and 
    $\pi = \inflatesome{3624715}{\ell,r}{12,21}$.
    It is easy to see that 
    with $\ell$ and $r$ fixed, $\gamma$ is unique.
    
    Let $\lambda = \inflatesome{\gamma}{\ell}{12}$
    and let $\rho = \inflatesome{\gamma}{r}{21}$.
    
    We define four (overlapping) subsets of 
    $[1, \pi)$ as follows:
    \begin{align*}
    L & = [1, \lambda]  \\
    R & = [1, \rho]  \\
    G & = L \cap R   \\
    T & = [1, \pi) \setminus ( L \cup R )    
    \end{align*}
    Since $\lambda$ and $\rho$ are both contained in $\pi$, 
    it is easy to see that
    \begin{align}
    \label{equation-PMS-two-adj-four-sums}
    \mobp{\pi} = 
    - \sum_{\tau \in L} \mobp{\tau}
    - \sum_{\tau \in R} \mobp{\tau}
    - \sum_{\tau \in T} \mobp{\tau}
    + \sum_{\tau \in G} \mobp{\tau}
    \end{align}
    To prove Theorem~\ref{theorem-PMF-opposing-adjacencies}
    it is sufficient to show that each of the
    four sums in Equation~\ref{equation-PMS-two-adj-four-sums}
    is zero.
    
    Consider first $\sum_{\tau \in L} \mobp{\tau}$.
    This is plainly zero from the definition of the 
    \mob function, and the same argument
    applies to $\sum_{\tau \in R} \mobp{\tau}$.
    
    Now consider $\sum_{\tau \in T} \mobp{\tau}$.
    We claim that any permutation $\tau \in T$ must have
    opposing adjacencies, and so, using our
    inductive hypothesis, $\mobp{\tau} = 0$.
    To justify our claim, 
    let $\tau$ be an element of $[1, \pi)$.
    If $\tau$ does not contain an up-adjacency,
    then $\tau$ must be in 
    $\permsetsome{\gamma}{\ell,r}{1,21}$,
    and so is in $R$.
    Similarly, if $\tau$ does not contain
    a down-adjacency, then $\tau \in L$.
    Thus if $\tau \in T$, then
    $\tau$ has an opposing adjacency.
    
    Finally, we consider $\sum_{\tau \in G} \mobp{\tau}$.
    We partition $G$ into two disjoint sets:
    \begin{align*}
    G_\gamma & = [1, \gamma]  \\
    G_x & =  L \cap R \setminus G_\gamma 
    \end{align*}
    From the construction of $L$ and $R$ 
    it is easy to see that
    $G_\gamma \subseteq L \cap R$, 
    so $G_\gamma$ and $G_x$ are well-defined, and we have
    $\sum_{\tau \in G} \mobp{\tau} 
    =
    \sum_{\tau \in G_\gamma} \mobp{\tau} 
    +
    \sum_{\tau \in G_x} \mobp{\tau} 
    $.
    In some cases we observe that
    $G_\gamma = L \cap R$, 
    but this is not true in general as, for example, 
    when $\pi = 53128746$, we have
    $G_x = \{ 4312 \}$.
    
    From the definition of the \mob function, 
    $\sum_{\tau \in G_\gamma} \mobp{\tau} = 0$,
    so to complete the proof of 
    Theorem~\ref{theorem-PMF-opposing-adjacencies}
    we simply need to show that
    $\sum_{\tau \in G_x} \mobp{\tau} = 0$.
    
    We claim that every permutation $\tau$ in 
    $G_x$ has an opposing adjacency, and so
    has $\mobp{\tau} = 0$.  
    
    Since 
    $L =        \permsetsome{\gamma}{\ell}{12}$, and
    $G_\gamma = \permsetsome{\gamma}{\ell}{1}$,
    it follows that, as 
    $\tau \in L \setminus G_\gamma$,
    then $\tau \in \permsetfixed{\gamma}{\ell}{12}$,
    so $\tau$ contains an up-adjacency.
    Similarly,  if
    $\tau \in R \setminus G_\gamma$,
    then $\tau \in \permsetfixed{\gamma}{r}{21}$,
    so $\tau$ contains a down-adjacency.
    Thus if $\tau \in G_x$, then $\tau$ has 
    opposing adjacencies, and
    so, by the inductive hypothesis,
    $\mobp{\tau} = 0$,
    and thus 
    $\sum_{\tau \in G_x} \mobp{\tau} = 0$.
\end{proof}
\subsection{Extending Theorem~\ref{theorem-PMF-opposing-adjacencies}}

It is natural to ask if we can extend 
Theorem~\ref{theorem-PMF-opposing-adjacencies}
to handle cases where 
the lower bound of the 
interval is not $1$.
This is not possible in general, as
if we take any permutation $\sigma \neq 1$,
and inflate any two distinct points in positions
$\ell$ and $r$ by $12$ and $21$
respectively, then
$\pi = \inflatesome{\sigma}{\ell,r}{12,21}$
has opposing adjacencies,
but 
$\mobfn{\sigma}{\pi} = 1$, as
can
be deduced from 
Figure~\ref{figure-extending-PFM-oppadj-fails}.
    \begin{figure}[!h]
    \begin{center}
        \begin{tikzpicture}[xscale=1,yscale=1]
        \node (n12-21) at ( 0, 3) {$\inflatesome{\sigma}{\ell,r}{12,21}$};
        \node (n12-1)  at (-2, 2) {$\inflatesome{\sigma}{\ell,r}{12,1}$};  
        \node (n1-21)  at ( 2, 2) {$\inflatesome{\sigma}{\ell,r}{1,21}$};  
        \node (n1-1)   at ( 0, 1) {$\inflatesome{\sigma}{\ell,r}{1,1}$};  
        \draw (n12-21) -- (n1-21);
        \draw (n12-21) -- (n12-1);
        \draw (n12-1) --  (n1-1);
        \draw (n1-21) --  (n1-1);
        \end{tikzpicture}
        \qquad
        \begin{tikzpicture}[xscale=1,yscale=1]
        \node (n12-21) at ( 0, 3) {$1$};
        \node (n12-1)  at (-2, 2) {$-1$};  
        \node (n1-21)  at ( 2, 2) {$-1$};  
        \node (n1-1)   at ( 0, 1) {$1$};  
        \draw (n12-21) -- (n1-21);
        \draw (n12-21) -- (n12-1);
        \draw (n12-1) --  (n1-1);
        \draw (n1-21) --  (n1-1);
        \end{tikzpicture}
    \end{center}
    \caption{The Hasse diagram of the interval 
        $[\sigma, \inflatesome{\sigma}{\ell,r}{12,21}]$,
        and the corresponding values of the \mob function.}
    \label{figure-extending-PFM-oppadj-fails}
\end{figure}
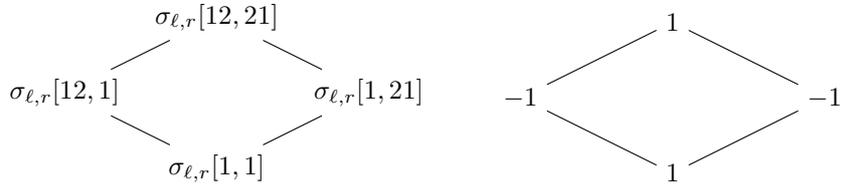

Although we do not have a general extension of
Theorem~\ref{theorem-PMF-opposing-adjacencies},
we can show that:
\begin{theorem}
    \label{theorem-MF-opposing-adjacencies}
    If $\sigma$ is adjacency-free,
    and $\pi$ contains an interval 
    order-isomorphic to a symmetry of $1243$,
    then $\mobfn{\sigma}{\pi} = 0$.
\end{theorem}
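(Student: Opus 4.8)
The plan is to mimic the inductive decomposition used for Theorem~\ref{theorem-PMF-opposing-adjacencies}, but to exploit the extra rigidity coming from a genuine \emph{interval} iso to $1243$, and to use adjacency-freeness of $\sigma$ only to rule out degenerate closed intervals. First I would reduce to the case where the interval is order-isomorphic to $1243$ itself: the Möbius function, the property of being adjacency-free, and the property of containing an interval iso to a given pattern are all invariant under the poset symmetries (reverse, complement, inverse), and these act transitively on the symmetry class of $1243$. Then I would induct on $\order{\pi}$; the base case $\order{\pi}=4$ forces $\pi=1243$, and the only adjacency-free $\sigma\leq 1243$ is $\sigma=1$, so the claim reduces to the known value $\mobp{1243}=0$.

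For the inductive step, let $I$ be an interval of $\pi$ iso to $1243$. Its first two points form an up-adjacency and its last two form a down-adjacency, and crucially these two adjacencies are \emph{consecutive}. Collapsing both to single points produces a permutation $\gamma$ in which the two collapse sites form an up-adjacency at positions $\ell,\ell+1$; equivalently, collapsing all of $I$ to a single point $p_k$ gives a permutation $\delta$ with $\pi=\inflatesome{\delta}{k}{1243}$. Writing $\lambda=\inflatesome{\gamma}{\ell}{12}=\inflatesome{\delta}{k}{123}$ and $\rho=\inflatesome{\gamma}{\ell+1}{21}=\inflatesome{\delta}{k}{132}$, and noting $\gamma=\inflatesome{\delta}{k}{12}$, I would split $[\sigma,\pi)$ exactly as in Theorem~\ref{theorem-PMF-opposing-adjacencies} into $L=[\sigma,\lambda]$, $R=[\sigma,\rho]$, $T=[\sigma,\pi)\setminus(L\cup R)$, together with $L\cap R$, and recover $\mobfn{\sigma}{\pi}$ by inclusion--exclusion over the corresponding sums.

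Three of these sums vanish for a soft reason, and this is the \emph{only} place adjacency-freeness of $\sigma$ is used: each of $\lambda,\rho,\gamma$ contains an adjacency (the blocks $123$, $132$, $12$ respectively), whereas $\sigma$ does not, so $\sigma\neq\lambda,\rho,\gamma$, and therefore $\sum_{\tau\in[\sigma,\lambda]}\mobfn{\sigma}{\tau}$, $\sum_{\tau\in[\sigma,\rho]}\mobfn{\sigma}{\tau}$ and $\sum_{\tau\in[\sigma,\gamma]}\mobfn{\sigma}{\tau}$ are each $0$ by the defining property of the Möbius function over a non-trivial closed interval. This is exactly the point that fails for a general lower bound: in the counterexample preceding the theorem the collapsed permutation equals $\sigma$. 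The sum over $T$ vanishes by induction: every permutation in $[\sigma,\pi)$ arises from a subpermutation of $\delta$ by inflating $p_k$ with an element of $\fullcover{1243}$, and $1243$ is the unique element of $\fullcover{1243}$ lying below neither $123$ nor $132$; hence for $\tau\in T$ every embedding into $\pi$ uses all four points of $I$, so $\tau$ contains an interval iso to $1243$ and $\mobfn{\sigma}{\tau}=0$ by the inductive hypothesis.

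The main obstacle is the set $L\cap R$, which I would partition into $G_\gamma=[\sigma,\gamma]$ (whose sum is $0$ as above) and $G_x=(L\cap R)\setminus G_\gamma$. The crux is to show $G_x=\emptyset$, equivalently $\fullcover{\lambda}\cap\fullcover{\rho}=\fullcover{\gamma}$. At the level of the inflated block this is the elementary identity $\fullcover{123}\cap\fullcover{132}=\fullcover{12}$, and this is precisely where a single interval iso to $1243$ does more work than the mere opposing adjacencies of Theorem~\ref{theorem-PMF-opposing-adjacencies}: because both chosen adjacencies come from one interval, $\lambda$, $\rho$ and $\gamma$ differ only inside the isolated point $p_k$, so any permutation contained in both $\lambda$ and $\rho$ is forced to a block pattern in $\fullcover{123}\cap\fullcover{132}=\fullcover{12}$ and hence lies in $\fullcover{\gamma}$. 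The reason this requires care rather than being immediate is that inflations are not unique, so I must rule out a permutation that embeds into $\lambda$ through the block but into $\rho$ through outside points; I expect this to be the hard part, handled by the inflation-set machinery showing the block interval is forced to be the same in both embeddings. Granting $G_x=\emptyset$, all four sums vanish and $\mobfn{\sigma}{\pi}=0$, completing the induction.
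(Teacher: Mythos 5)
Your overall skeleton is the paper's: reduce by symmetry to $1243$, induct on $\order{\pi}$, collapse the distinguished interval to get $\gamma$, $\lambda$, $\rho$, split $[\sigma,\pi)$ into $L$, $R$, $T$, $G_\gamma$, $G_x$, and kill each sum. You correctly isolate where adjacency-freeness of $\sigma$ enters (it forces $\sigma\neq\lambda,\rho,\gamma$, so the closed-interval sums vanish; this is exactly what breaks in the counterexample of Figure~\ref{figure-extending-PFM-oppadj-fails}), and your argument for $T$ (every embedding of $\tau\in T$ into $\pi$ must use all four points of the block, so $\tau$ inherits a $1243$-interval and the inductive hypothesis applies) is the paper's argument. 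Folding the paper's explicit base case (the Hasse diagram of $[\sigma,\inflatesome{\sigma}{c}{1243}]$ in Figure~\ref{figure-hasse-interval-inflate-1243}) into the general inductive step is a legitimate reorganisation.

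The genuine gap is your treatment of $G_x$, and it is the crux. You assert $G_x=\emptyset$, i.e.\ $\fullcover{\lambda}\cap\fullcover{\rho}=\fullcover{\gamma}$, and justify it by the block-level identity $\fullcover{123}\cap\fullcover{132}=\fullcover{12}$. That identity does not lift to inflations on its own: a permutation $\tau$ can lie below $\lambda$ via an embedding that consumes the whole $123$-block and below $\rho$ via an embedding that consumes the whole $132$-block, with the two witness intervals sitting at \emph{different} places in $\tau$, and nothing you have said rules this out. You flag this yourself as ``the hard part'' and defer it, so the central step of your proof is an unproven lemma. Indeed the paper's own example in the proof of Theorem~\ref{theorem-PMF-opposing-adjacencies} (where $\pi=53128746$ gives $G_x=\{4312\}$) shows that the analogous intersection identity fails as soon as the two inflated pieces occupy different slots, so any proof of your claim must exploit the single-slot structure in an essential way --- which is precisely the work that is missing. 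The paper avoids this entirely by proving something weaker: it does not claim $G_x$ is empty, only that every $\tau\in G_x$ contains an interval order-isomorphic to $1243$, whereupon the inductive hypothesis gives $\mobfn{\sigma}{\tau}=0$. That route is more robust, since it survives even if $G_x\neq\emptyset$; your route commits you to an exact structural statement about $\fullcover{\lambda}\cap\fullcover{\rho}$ that you have not established and that is strictly stronger than what is needed.
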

\begin{proof}
    First note that if $\sigma \not\leq \pi$,
    then $\mobfn{\sigma}{\pi} = 0$ 
    from the definition of the \mob function.
    Further, since $\sigma$ is adjacency-free,
    we cannot have $\sigma = \pi$.
    
    We can now assume that $\sigma < \pi$.
    Without loss of generality 
    we can also assume, by symmetry, that the
    interval in $\pi$ is
    order-isomorphic to $1243$.
    
    We start by claiming that, for any permutation
    $\sigma$ which is adjacency-free, 
    and any $c$ with $1 \leq c \leq \order{\sigma}$,
    we have 
    $\mobfn{\sigma}{\inflatesome{\sigma}{c}{1243}} = 0$.
    The Hasse diagram of the interval 
    $[\sigma, \inflatesome{\sigma}{c}{1243}]$ is shown in
    Figure~\ref{figure-hasse-interval-inflate-1243}.
    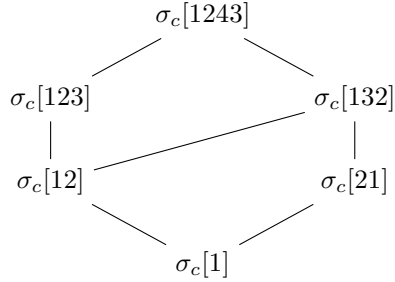
\begin{figure}[!h]
        \begin{center}
            \begin{tikzpicture}[xscale=1,yscale=1.1]
            \node (n1243) at ( 0, 3) {$\inflatesome{\sigma}{c}{1243}$};
            \node (n123)  at (-2, 2) {$\inflatesome{\sigma}{c}{123}$};  
            \node (n132)  at ( 2, 2) {$\inflatesome{\sigma}{c}{132}$};  
            \node (n12)   at (-2, 1) {$\inflatesome{\sigma}{c}{12}$};  
            \node (n21)   at ( 2, 1) {$\inflatesome{\sigma}{c}{21}$};  
            \node (n1)    at ( 0, 0) {$\inflatesome{\sigma}{c}{1}$};  
            \draw (n1243) -- (n123);
            \draw (n1243) -- (n132);
            \draw (n123) --  (n12);
            \draw (n132) --  (n12);
            \draw (n132) --  (n21);
            \draw (n12)  --  (n1);
            \draw (n21)  --  (n1);            
            \end{tikzpicture}
        \end{center}
        \caption{The Hasse diagram of the interval 
            $[\sigma, \inflatesome{\sigma}{c}{1243}]$.}
        \label{figure-hasse-interval-inflate-1243}
    \end{figure}
    From the definition of the \mob function, we have
    $\mobfn{\sigma}{\inflatesome{\sigma}{c}{1}} = 1$,
    $\mobfn{\sigma}{\inflatesome{\sigma}{c}{12}} = -1$, 
    $\mobfn{\sigma}{\inflatesome{\sigma}{c}{21}} = -1$, 
    $\mobfn{\sigma}{\inflatesome{\sigma}{c}{123}} = 0$,
    and
    $\mobfn{\sigma}{\inflatesome{\sigma}{c}{132}} = 1$,
    and so
    $\mobfn{\sigma}{\inflatesome{\sigma}{c}{1243}} = 0$,
    and thus our claim is true.
    
    Our argument now follows a similar pattern to
    that used by Theorem~\ref{theorem-PMF-opposing-adjacencies},
    and we restrict ourselves to highlighting the differences.
    
    Assume that $\pi$ is a proper inflation of $\sigma$,
    with length greater than $\order{\sigma} + 4$, 
    and $\pi$ contains an interval order-isomorphic to $1243$.
    Let $\gamma$ be the permutation formed by replacing
    an occurrence of $1243$ in $\pi$ by $12$, so if 
    $\ell$ is the position of the first point 
    of the $1243$ selected, then
    $\inflatesome{\gamma}{\ell,\ell+1}{12,21} = \pi$.
    Let $\lambda = \inflatesome{\gamma}{\ell,\ell+1}{12,1}$, 
    and let $\rho = \inflatesome{\gamma}{\ell,\ell+1}{1,21}$;
    Define sets 
    $L = [\sigma, \lambda]$,
    $R = [\sigma, \rho]$,
    $G_\gamma = [\sigma, \gamma]$,
    $G_x = L \cap R \setminus G_\gamma$, and
    $T = [\sigma, \pi) \setminus (L \cup R)$.
    
    Similarly to Theorem~\ref{theorem-PMF-opposing-adjacencies}, 
    we have 
    \[
    \mobfn{\sigma}{\pi} = 
    - \sum_{\tau \in L} \mobfn{\sigma}{\tau}
    - \sum_{\tau \in R} \mobfn{\sigma}{\tau}
    - \sum_{\tau \in T} \mobfn{\sigma}{\tau}
    + \sum_{\tau \in G_\gamma} \mobfn{\sigma}{\tau}
    + \sum_{\tau \in G_x} \mobfn{\sigma}{\tau},
    \]
    and the sums over the sets 
    $L$, $R$ and $G_\gamma$ are obviously zero.
    Using similar arguments to 
    Theorem~\ref{theorem-PMF-opposing-adjacencies}, 
    we can see that every permutation $\tau$
    in $T$ or $G_x$ contains
    an interval order-isomorphic to $1243$,
    and so by the inductive hypothesis,
    has $\mobfn{\sigma}{\tau} = 0$,
    and thus we have 
    $\mobfn{\sigma}{\pi} = 0$.
\end{proof}    

Although we cannot finds a general 
extension to
Theorem~\ref{theorem-PMF-opposing-adjacencies},
we can find a necessary condition for 
a proper inflation of certain permutations
to have a \mob function value of zero.
This is
\begin{lemma}
    If $\sigma$ is adjacency-free,
    and 
    $\pi = \inflateall{\sigma}{\alpha_1, \ldots, \alpha_n}$ 
    is a proper inflation of $\sigma$,
    then $\mobfn{\sigma}{\pi} = 0$ implies that
    at least one $\alpha_i \not \in \{1, 12, 21\}$.    
\end{lemma}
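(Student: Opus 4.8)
The plan is to prove the contrapositive: assuming every $\alpha_i \in \{1, 12, 21\}$, I will show that $\mobfn{\sigma}{\pi} \neq 0$. Write $S = \{i : \alpha_i \neq 1\}$ for the set of inflated positions. I claim that the whole interval $[\sigma, \pi]$ is order-isomorphic to the Boolean lattice on $S$, so that $\mobfn{\sigma}{\pi} = (-1)^{\order{S}} \neq 0$ by the product formula for Möbius functions (the degenerate case $S = \emptyset$ is just $\pi = \sigma$, giving $\mobfn{\sigma}{\sigma} = 1$).

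First I would identify the elements of $[\sigma,\pi]$. Because $\pi = \inflateall{\sigma}{\alpha_1,\ldots,\alpha_n}$ arranges its points into non-interleaving blocks $B_1,\ldots,B_n$ (block $B_i$ occupying consecutive positions and consecutive values, laid out according to $\sigma$), reading off any embedding of a permutation $\tau \leq \pi$ block-by-block shows that $\tau = \inflateall{\sigma}{\gamma_1,\ldots,\gamma_n}$ with each $\gamma_i \in \fullcover{\alpha_i}$; that is, $\tau \in \permsetall{\sigma}{\alpha_1,\ldots,\alpha_n}$. It then remains to decide which of these sub-inflations also satisfy $\sigma \leq \tau$. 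The key claim is that $\sigma \leq \tau$ exactly when no $\gamma_i$ equals the empty permutation $\emptyperm$. One direction is immediate by collapsing each block to a point. For the other, suppose some block is deleted, i.e. $\gamma_i = \emptyperm$ for at least one $i$; then $\tau$ is an inflation of $\sigma$ with a point removed and so has fewer than $n$ nonempty blocks, while any embedding of $\sigma$ into $\tau$ places its $n$ image points among these blocks. By pigeonhole some block of size two receives both of its points from the image, and since such a block is an adjacency of $\tau$ its two points are consecutive in both position and value, hence consecutive within the image, which forces an adjacency in $\sigma$ and contradicts adjacency-freeness.

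Once this is established, the surviving sub-inflations are exactly the $\tau_T = \inflateall{\sigma}{\beta_1,\ldots,\beta_n}$ indexed by $T \subseteq S$, where $\beta_i = \alpha_i$ for $i \in T$ and $\beta_i = 1$ otherwise. To see that $T \mapsto \tau_T$ is a poset isomorphism onto $[\sigma,\pi]$, I would check that no adjacency of $\tau_T$ spans two distinct blocks: a cross-block adjacency would make the value-ranges of two positionally consecutive blocks adjacent, again forcing an adjacency in $\sigma$. Thus the adjacencies of $\tau_T$ are precisely the inflated blocks indexed by $T$, their positions and up/down types recover $T$, the map is injective, and order-reflection follows by applying the same characterisation to each subinterval $[\sigma, \tau_{T'}]$. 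Hence $[\sigma,\pi]$ is the product of the two-element chains $\{1 \leq \alpha_i\}$ over $i \in S$, its Möbius value is $(-1)^{\order{S}}$, and the contrapositive is proved. I expect the pigeonhole step, showing that deleting any block destroys the containment of $\sigma$, to be the main obstacle, since everything else is bookkeeping about blocks; the adjacency-free hypothesis is exactly what makes that step work, and it is used again to rule out accidental cross-block adjacencies.
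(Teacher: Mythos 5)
Your proof is correct and takes essentially the same route as the paper: both argue the contrapositive by showing that $[\sigma,\pi]$ is isomorphic to the Boolean lattice on the set of inflated positions, whence $\mobfn{\sigma}{\pi} = (-1)^{\order{\pi}-\order{\sigma}} \neq 0$. The paper merely asserts the unique-representation claim for elements of the interval; your pigeonhole argument (deleting a block destroys containment of $\sigma$) and the cross-block adjacency check are precisely the details that justify it, and they correctly identify where the adjacency-free hypothesis is needed.
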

\begin{proof}
    Assume that every $\alpha_i \in \{1, 12, 21\}$.
    Let $k$ be the number of $\alpha_i$-s that are not equal to 1,
    and 
    let $j_1, \ldots , j_k$
    be the indexes ($i$-s) where $\alpha_i \neq 1$,
    so 
    $\pi = 
    \inflatesome{\sigma}{j_1, \ldots, j_k}
    {\alpha_{j_1}, \ldots, \alpha_{j_k}}$.
    
    Then every permutation in the interval
    $[\sigma, \pi]$ has a unique representation
    as 
    $\inflatesome{\sigma}{j_1, \ldots, j_k}{\upsilon_1, \ldots , \upsilon_k}$,
    where
    \begin{align*}
    \upsilon_i & \in
    \begin{cases}
    \{ 1, 12 \} & \text{if } \alpha_{j_i} = 12, \\
    \{ 1, 21 \} & \text{if } \alpha_{j_i} = 21. \\
    \end{cases}
    \end{align*}
    So each position $j_i$ can be inflated 
    by one of two permutations, and thus
    there is an obvious isomorphism
    between permutations in the interval
    $[\sigma, \pi]$  
    and binary numbers with $k$ bits.
    It follows that the poset can be represented as
    a Boolean algebra, and so by a 
    well-known result 
    (see, for instance,
    Example 3.8.3 in Stanley~\cite{Stanley2012}),
    $\mobfn{\sigma}{\pi} = (-1)^{\order{\pi} - \order{\sigma}}$.
    Thus if $\mobfn{\sigma}{\pi} = 0$, at least one 
    $\alpha_i \not \in \{1, 12, 21\}$.
\end{proof}

\section{Permutations containing a specific interval}
\label{section-permutations-containing-a-specific-interval}

In this section we show that if a permutation $\phi$ 
meets certain requirements,
then any permutation $\pi$ with an 
interval order-isomorphic to $\phi$
has $\mobp{\pi} = 0$.  

Recall that if $\pi$ is a permutation, then 
a permutation $\sigma$ is \emph{covered} by $\pi$
if $\sigma < \pi$, and there is no 
permutation $\tau$ such that $\sigma < \tau < \pi$.
The set of permutations covered by $\pi$
is the \emph{cover} of $\pi$, written 
$\cover{\pi}$.

As with opposing adjacencies, our proof is inductive.
For the induction step with some permutation $\pi$,
our approach is to partition the poset
into subsets 
and then show that 
the sum of the principal \mob function values
over the permutations
in each subset is zero.
As an example, 
Figure~\ref{figure-PMF-strongly-zero-sets}
shows how we would partition
the interval $[1, 1324657)$.  
In this case we have four subsets:
$P$,
$L_1$,
$L_2$ and 
$R$.
\begin{figure}[h!]
    \begin{center}
        \begin{tikzpicture}[xscale=1.5,yscale=1.5]
        \dnode{1324657}{3}{6};
        
        \cnode{132456}{0}{5}{$L_2$}{cyan};
        \cnode{123546}{1.5}{5}{$L_1$}{blue};
        \cnode{213546}{6}{5}{$R$}{red};
        \cnode{132546}{4.5}{5}{$P$}{magenta};
        \cnode{132465}{7}{5}{$R$}{red};
        
        \dline{1324657}{132546,123546,132456,132465,213546}        
        
        \cnode{12345}{1}{4}{$L_1$}{blue};
        \cnode{21345}{0}{4}{$L_2$}{cyan};
        \cnode{12435}{4}{4}{$P$}{magenta};
        \cnode{13245}{3}{4}{$P$}{magenta};
        \cnode{12354}{2}{4}{$L_1$}{blue};
        \cnode{21435}{5}{4}{$P$}{magenta};
        \cnode{21354}{7}{4}{$R$}{red};
        \cnode{13254}{6}{4}{$P$}{magenta};
        
        \dline{132546}{12435,13245,21435,13254};
        \dline{123546}{12345,12435,12354};
        \dline{132456}{12345,21345,13245};
        \dline{132465}{13245,12354,21354,13254};
        \dline{213546}{21345,12435,21435,21354};
        
        \cnode{1234}{2}{3}{$P$}{magenta};
        \cnode{2134}{3}{3}{$P$}{magenta};
        \cnode{1324}{4}{3}{$P$}{magenta};
        \cnode{1243}{5}{3}{$P$}{magenta};
        \cnode{2143}{6}{3}{$P$}{magenta};
        
        \dline{12435}{1234,1324,1243};
        \dline{13245}{1234,2134,1324};
        \dline{21435}{2134,1324,2143};
        \dline{13254}{1324,1243,2143};
        \dline{12345}{1234};
        \dline{12354}{1234,1243};
        \dline{21345}{1234,2134};
        \dline{21354}{2134,1243,2143};  
        
        \cnode{123}{3}{2}{$P$}{magenta};      
        \cnode{213}{4}{2}{$P$}{magenta};      
        \cnode{132}{5}{2}{$P$}{magenta};      
        
        \dline{1234}{123};
        \dline{2134}{123,213};
        \dline{1324}{123,213,132};
        \dline{1243}{123,132};
        \dline{2143}{213,132};
        
        \cnode{12}{4}{1}{$P$}{magenta};
        \cnode{21}{5}{1}{$P$}{magenta};
        
        \dline{123}{12};
        \dline{213}{12,21};
        \dline{132}{12,21};
        
        \cnode{1}{4}{0}{$P$}{magenta};
        
        \dline{12}{1};
        \dline{21}{1};
        \end{tikzpicture}
    \end{center}
    \caption{Four subsets of $[1, 1324657)$:
        $P$, 
        $L_1$, 
        $L_2$, 
        and 
        $R$.}
    \label{figure-PMF-strongly-zero-sets}
\end{figure}
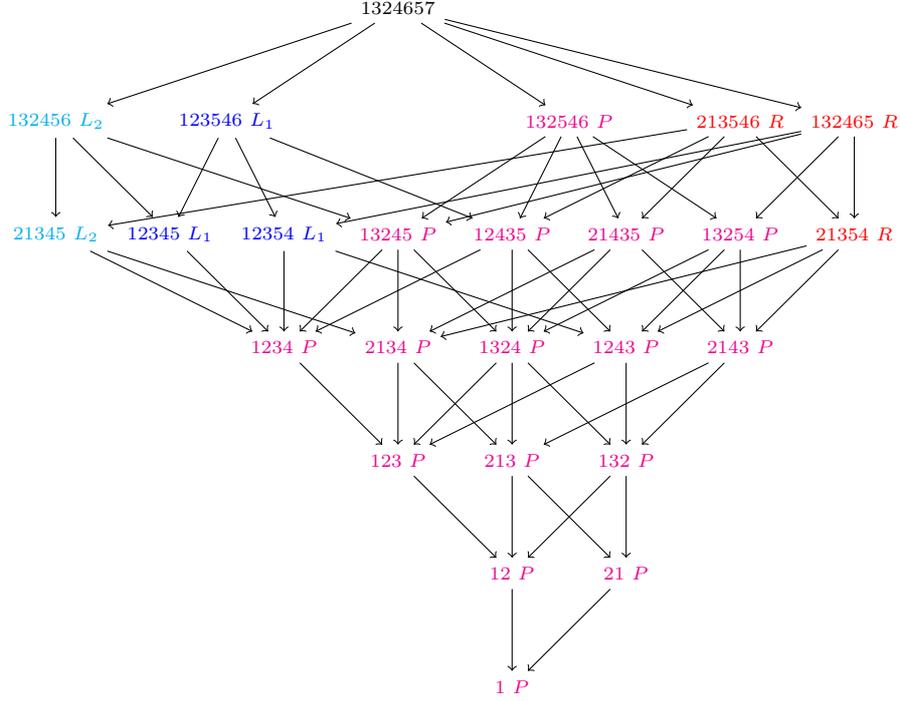
The partitioning is based around the permutations that
are covered by $\phi$.  
One of these permutations is the ``core'', 
and the associated subset is $P$.
Further subsets are specified by the 
other permutations in the cover of $\phi$,
making sure that no permutation
is included in more than one subset,
and we label these subsets $L_1, \ldots L_k$.
Once we have iterated through 
the subsets defined by the permutations 
in the cover of $\phi$, any remaining
permutations are placed in a final subset $R$.
Our proof then relies on showing that,
for each set $S$, 
$\sum_{\tau \in S} \mobp{\tau} = 0$.

We will need to use a 
lemma from Smith~\cite{Smith2013}:

\begin{lemma}[{%
        Smith~\cite[Lemma 1]{Smith2013}}]
    \label{Smith-lemma-triple-adjacencies}
    If a permutation $\pi$
    contains a triple adjacency then $\mobp{\pi} = 0$.
\end{lemma}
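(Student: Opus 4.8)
The plan is to mirror the inductive structure of the proof of Theorem~\ref{theorem-PMF-opposing-adjacencies}, but with a single inflated position rather than two. Since complementing a permutation is a poset automorphism that preserves $\mobp{\cdot}$ and exchanges ascending and descending triple adjacencies, I would first reduce, by symmetry, to the case where the triple adjacency is order-isomorphic to $123$. For the base case (length three) one computes directly that $[1,123) = \{1,12\}$, so $\mobp{123} = -(\mobp{1} + \mobp{12}) = -(1-1) = 0$, and the descending case $321$ follows by symmetry.

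For the inductive step I would assume the result for all permutations shorter than $\pi$, and suppose $\pi$ contains an ascending triple adjacency. Let $\gamma$ be the permutation obtained by contracting this triple to a single point, so that $\pi = \inflatesome{\gamma}{c}{123}$, and set $\lambda = \inflatesome{\gamma}{c}{12}$, the permutation in which the triple is replaced by a single up-adjacency. Since $\lambda < \pi$, the closed interval $[1,\lambda]$ lies inside $[1,\pi)$, and $1 < \lambda$, so the defining recursion gives $\sum_{\tau \in [1,\lambda]} \mobp{\tau} = 0$. Hence
\[
\mobp{\pi} = -\sum_{\tau \in [1,\pi)} \mobp{\tau} = -\sum_{\tau \in [1,\pi) \setminus [1,\lambda]} \mobp{\tau},
\]
and it suffices to show that every $\tau \in [1,\pi) \setminus [1,\lambda]$ itself contains a triple adjacency, whence $\mobp{\tau} = 0$ by the inductive hypothesis.

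This last claim is the heart of the argument. Viewing $\tau \leq \pi$ as a choice of points of $\pi$, I would split on how many of the three triple points $\tau$ retains. If $\tau$ uses at most two of them, then its restriction to the triple positions embeds into $12$, so $\tau \leq \inflatesome{\gamma}{c}{12} = \lambda$; contrapositively, any $\tau \not\leq \lambda$ must retain all three triple points. Because these three points occupy consecutive positions and consecutive values in $\pi$, no other point of $\pi$ lies strictly between them in either coordinate, so in any sub-permutation retaining all three they still form a monotone interval of length three, i.e. a triple adjacency. Thus every $\tau \in [1,\pi) \setminus [1,\lambda]$ contains a triple adjacency and, being strictly shorter than $\pi$, has $\mobp{\tau} = 0$ by induction, giving $\mobp{\pi} = 0$.

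The main obstacle is the careful bookkeeping in this final claim: establishing that ``not below $\lambda$'' is exactly equivalent to ``retains all three triple points'', and that such sub-permutations genuinely preserve the triple adjacency. Everything else reduces to the \mob recursion and symmetry, which are routine. I would observe that this argument is essentially the single-block specialisation of the $L$/$T$ decomposition of Theorem~\ref{theorem-PMF-opposing-adjacencies}, with $[1,\lambda]$ playing the role of the ``obviously zero'' block $L$ and $[1,\pi) \setminus [1,\lambda]$ the role of $T$, handled by induction.
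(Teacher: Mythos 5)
The paper does not actually prove this lemma: it is imported verbatim from Smith~\cite{Smith2013} as a known result, so there is no in-paper argument to compare against. Your proof is correct and self-contained. The base case computation is right, the reduction to the ascending case by complementation is legitimate (complementation is a poset automorphism fixing $1$), and the key structural facts hold: deleting any one of the three points of the monotone triple from $\pi$ yields the same permutation $\lambda = \inflatesome{\gamma}{c}{12}$, so any embedding of $\tau$ into $\pi$ that omits a triple point witnesses $\tau \leq \lambda$; contrapositively every $\tau \in [1,\pi)\setminus[1,\lambda]$ retains all three points in every embedding, and since the triple is an interval of $\pi$ it remains a monotone interval of length three in $\tau$. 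Combined with $\sum_{\tau\in[1,\lambda]}\mobp{\tau}=0$ (valid since $1<\lambda$) and the inductive hypothesis, this gives $\mobp{\pi}=0$. As you observe, this is exactly the one-block specialisation of the $L$/$T$ decomposition the paper uses for Theorem~\ref{theorem-PMF-opposing-adjacencies} (and later, in richer form, for Theorem~\ref{theorem-PMF-strongly-zero-interval}); what your version buys is a uniform treatment in which the triple-adjacency lemma need not be taken on faith from~\cite{Smith2013} but falls out of the same machinery. The only point demanding care, which you handle correctly, is that ``$\tau$ uses at most two triple points'' must be read per-embedding, with the contrapositive quantified over all embeddings.
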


Given a permutation $\pi$, it is sometimes 
possible to determine the value of $\mobp{\pi}$
by considering small localities of $\pi$.  
As examples, if $\pi$ contains a monotonic interval
of length three or more,
then 
by Lemma~\ref{Smith-lemma-triple-adjacencies},
$\mobp{\pi} = 0$.  
Theorem~\ref{theorem-PMF-opposing-adjacencies}
in this paper is another example,
as the presence of opposing adjacencies 
guarantees that $\mobp{\pi} = 0$.
The two instances mentioned can be rephrased 
in terms of inflations, so if a permutation $\pi$
can be written as
$\inflatesome{\gamma}{c}{123}$,
$\inflatesome{\gamma}{c}{321}$,
$\inflatesome{\gamma}{\ell,r}{12,21}$,
or
$\inflatesome{\gamma}{\ell,r}{21,12}$
with 
$1 \leq \ell < r \leq \order{\gamma}$, 
and
$1 \leq c \leq \order{\gamma}$,
then we know, from 
Lemma~\ref{Smith-lemma-triple-adjacencies}
and
Theorem~\ref{theorem-PMF-opposing-adjacencies},
that $\mobp{\pi} = 0$.

Let $\stronglyzeroset$ be the set of
permutations such that
if any permutation $\pi$ contains
an interval order-isomorphic 
to some $\phi \in \stronglyzeroset$
then $\mobp{\pi} = 0$.  
We know that
$\stronglyzeroset$ is non-empty,
since $123$, $321$ 
and all permutations with opposing adjacencies
are elements of $\stronglyzeroset$.
If a permutation $\phi$
is an element of $\stronglyzeroset$, then we
say that $\phi$ is 
\emph{strongly zero}.

There are cases where we can determine that
the principal \mob function of a permutation is zero
by examining part of the
permutation, but the
permutation is not strongly zero.
As an example,
if a permutation $\pi$, with $\order{\pi} > 2$,
begins $12$, then
as a
consequence of  
Propositions 1 and 2 in 
Burstein, 
Jel{\'{i}}nek, 
Jel{\'{i}}nkov{\'{a}} and 
Steingr{\'{i}}msson~\cite{Burstein2011}
(first stated explicitly as Lemma 4 in 
Brignall and Marchant~\cite{Brignall2017a}),
$\mobp{\pi} = 0$.
Such a permutation is not strongly zero,
since $12 \not \in \stronglyzeroset$.

We need one further definition before we can define ``core''
and proceed to the statement of our theorem.

Let $L = \{ \lambda_1, \ldots, \lambda_n \}$
be a set of permutations.
The \emph{ground} of $L$, $\ground{L}$,
is the set of permutations formed
by taking the union of the closure of each permutation in 
$L$, and then removing any permutation
that is strongly zero, so
\[
\ground{L}
=
\left(
\bigcup_{\lambda \in L} \fullcover{\lambda}
\right)
\setminus \stronglyzeroset.
\]
As an example, if
$L = \{ 1243, 2134\}$,
then
we have
\begin{align*}
&&\fullcover{1243} & = \{1243, 132, 123, 12, 21, 1, \emptyperm \}, \\
&&\qquad\fullcover{2134} & = \{2134, 123, 213, 12, 21, 1, \emptyperm \},  \\
&\text{and} &\stronglyzeroset & = \{ 123, 321, 1243, 2134, \ldots \}, \\
&\text{so} &\qquad \ground{L}       & = \{132, 213, 12, 21, 1, \emptyperm \}.
\end{align*}

Let $\phi$ be a permutation, 
and let $\psi$ be an element of $\cover{\phi}$.
Let $L = \cover{\phi} \setminus \psi$.
We say that $\psi$ is a \emph{core} of $\phi$
if every permutation in the ground of $L$
is contained in $\psi$, i.e., 
$\ground{L} \subseteq \fullcover{\psi}$.

If a permutation $\phi$ has a core $\psi$,
then this means that
every permutation in
$\cover{\phi} \setminus \psi$
must be strongly zero.
As a consequence, any permutation $\phi$
where $\cover{\phi}$ contains more than one
permutation that is not strongly zero
does not have a core.
Further, if $\phi$ contains exactly one
permutation $\psi$ that is not strongly zero,
then either $\psi$ is the core, as it meets the
requirement that
$\ground{\cover{\phi}} \subseteq \fullcover{\psi}$,
or $\phi$ does not have a core.

A further consequence of this definition,
which we will use in our proof,
is that
if $\phi$ has a core $\psi$,
then the closure of $\phi$ 
is the union of 
$\phi$,
permutations in the closure of $\psi$,
and
permutations that are structurally zero,
so
$
\fullcover{\phi} \setminus ( \phi \cup \fullcover{\psi} ) 
\subset \stronglyzeroset
$.

It is possible for a permutation $\phi$ to have 
more than one core.
For our purposes, all we require is that a core
exists, and henceforth 
we will refer to ``the core''
of a permutation.

As an example,
let $\phi = 21354$.  Then 
$\cover{\phi}  = \{1243, 2143, 2134 \} $.
The only possibility for the core is $2143$, so we have
\begin{align*}
&&\fullcover{2143} &= \{2143, 132, 213, 12, 21, 1, \emptyperm \} \\
&\text{and}& \ground{\cover{21354} \setminus 2143} 
& =
\{132, 213, 12, 12, 1, \emptyperm  \},\\
&\text{so}&\ground{\cover{21354}} \setminus 2143
& \subseteq \fullcover{2143}
\end{align*}
and thus $2143$ is the core of $21354$.

We now have our final definition in this section.
Let $\phi$ be any permutation.
We say that $\phi$ is \emph{nice}  
if
$\mobp{\phi} = 0$ and 
$\phi$ has a core.
Continuing with our running example of
$\phi = 21354$, we know that
$\phi$ has a core of $2143$.  Since
$\mobp{21354}  = 0$,
this gives us that $21354$ is nice.

We are now in a position to state our main theorem for this section.
\begin{theorem}
    \label{theorem-PMF-strongly-zero-interval}
    If $\phi$ is a nice permutation,
    and $\pi$ is any permutation
    containing an interval order-isomorphic to $\phi$,
    then $\mobp{\pi} = 0$, thus
    $\phi \in \stronglyzeroset$.
\end{theorem}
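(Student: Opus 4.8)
The plan is to reproduce the inductive partition-and-sum strategy of Theorem~\ref{theorem-PMF-opposing-adjacencies}, now organised around $\cover{\phi}$ and its core. Fix the nice permutation $\phi$, let $\psi \in \cover{\phi}$ be its core, and write $\cover{\phi} \setminus \psi = \{\chi_1, \dots, \chi_k\}$; note each $\chi_i$ is strongly zero. I would induct on $\order{\pi}$ over all $\pi$ containing an interval order-isomorphic to $\phi$. The base case $\pi = \phi$ is immediate, since $\mobp{\phi} = 0$ because $\phi$ is nice. For the inductive step, write $\pi = \inflatesome{\gamma}{c}{\phi}$ with $\order{\gamma} \geq 2$, set $\mu_\chi = \inflatesome{\gamma}{c}{\chi}$ for $\chi \in \cover{\phi}$, and define $P = [1, \mu_\psi]$, then $L_i = [1, \mu_{\chi_i}] \setminus (P \cup L_1 \cup \dots \cup L_{i-1})$ for $i = 1, \dots, k$, and $R = [1, \pi) \setminus (P \cup L_1 \cup \dots \cup L_k)$. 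These sets partition $[1, \pi)$ by construction, so $\mobp{\pi} = -\sum_{\tau \in [1,\pi)} \mobp{\tau}$ reduces to showing $\sum_{\tau \in S} \mobp{\tau} = 0$ for each of $S \in \{P, L_1, \dots, L_k, R\}$.

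The first supporting observation is geometric: the $\phi$-block of $\pi$ occupies a contiguous set of positions and a contiguous set of values, so for any $\tau \leq \pi$ the points of $\tau$ lying in that block form an interval of $\tau$, whose pattern $\beta$ lies in $\fullcover{\phi}$. Moreover the points of $\tau$ outside the block embed into the $\gamma$-structure in a way that does not depend on what inflates position $c$, so whenever $\beta \leq \chi$ for some $\chi \in \cover{\phi}$ we get $\tau \leq \mu_\chi$. With this in hand, $\sum_{\tau \in P} \mobp{\tau} = 0$ is immediate from the identity $\sum_{\tau \in [1, \mu_\psi]} \mobp{\tau} = 0$, valid since $\mu_\psi > 1$. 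For $R$: if $\tau \in R$ then $\beta \not\leq \chi$ for every $\chi \in \cover{\phi}$, and since any $\beta < \phi$ is contained in some cover of $\phi$ (the containment poset is graded by length), this forces $\beta = \phi$. Hence $\tau$ contains $\phi$ as an interval, and as $\tau < \pi$ the inductive hypothesis gives $\mobp{\tau} = 0$.

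The crux is each sum over $L_i$, and this is where the core hypothesis does its work. For $\tau \in L_i$ the block pattern satisfies $\beta \in \fullcover{\chi_i}$, and since $\tau \notin P$ the embedding observation (contrapositive of $\beta \leq \psi \Rightarrow \tau \leq \mu_\psi$) gives $\beta \not\leq \psi$. If $\beta$ were not strongly zero, then $\beta$ would belong to $\ground{\cover{\phi} \setminus \psi}$, which by the defining property of the core is contained in $\fullcover{\psi}$, yielding $\beta \leq \psi$ and contradicting $\beta \not\leq \psi$. Hence $\beta \in \stronglyzeroset$, so $\tau$ contains the strongly zero pattern $\beta$ as an interval and $\mobp{\tau} = 0$, giving $\sum_{\tau \in L_i} \mobp{\tau} = 0$. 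Combining the vanishing of all four families of sums yields $\mobp{\pi} = 0$, completing the induction and showing $\phi \in \stronglyzeroset$.

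I expect the main obstacle to be the bookkeeping that makes these claims rigorous: verifying that the block of $\pi$ genuinely restricts to an interval in every $\tau \leq \pi$ (so that ``contains $\beta$ as an interval'' is legitimate), that $P, L_1, \dots, L_k, R$ really do partition $[1, \pi)$ with no overlaps, and that the implication $\beta \leq \chi \Rightarrow \tau \leq \mu_\chi$ holds for the specific embeddings. The conceptual heart, by contrast, is the single application of the core property to force the block pattern $\beta$ to be strongly zero, which is precisely what the definitions of \emph{ground}, \emph{core}, and \emph{nice} were engineered to deliver.
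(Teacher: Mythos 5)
Your proposal matches the paper's proof essentially step for step: the same induction on $\order{\pi}$, the same partition of $[1,\pi)$ into $P, L_1, \ldots, L_k, R$ built from the core and the remaining covered permutations, the same use of the core/ground property to force the block pattern in each $L_i$ to be strongly zero, and the same appeal to the inductive hypothesis on $R$. The only difference is notational --- you argue via block patterns $\beta$ where the paper uses its inflation-set notation $\permsetsome{\gamma}{c}{\cdot}$ and $\permsetfixed{\gamma}{c}{\cdot}$ --- so the approach is the same and the argument is correct.
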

\begin{proof}
    We proceed by induction.  
    First, if $\phi = \pi$, then by definition
    $\mobp{\pi} = 0$.
    Now assume that, for a given $\phi$,
    Theorem~\ref{theorem-PMF-strongly-zero-interval}
    is true for all permutations with length
    less than some $n$, where $n > \order{\phi}$.
    Let $\pi$ be a permutation of length $n$
    that contains at least one interval
    order-isomorphic to $\phi$. 
    Choose one of the intervals order-isomorphic to $\phi$,
    and let $\gamma$ be the permutation obtained
    by replacing the chosen interval with a single point.
    Let $c$ be the index of the first point of the chosen
    interval in $\pi$,
    so $\pi = \inflatesome{\gamma}{c}{\phi}$.
    
    Let $\psi$ be the core of $\phi$;
    and let $\lambda_1, \ldots, \lambda_k$
    be the permutations covered by 
    $\phi$ excluding $\psi$, 
    i.e., $\cover{\phi} \setminus \psi$.    
    
    Our approach is to divide the
    poset $[1, \pi)$ into disjoint sets
    $P, L_1, \ldots , L_k, R$, as
    defined below, 
    and then show that for each set 
    $S \in \{ P, L_1, \ldots , L_k, R \}$,
    $\sum_{\tau \in S} \mobp{\tau} = 0$, 
    which gives us $\mobp{\pi} = 0$.
    
    Define sets of permutations as follows:
    \begin{align*}
    & &
    P   & = \fullcover{\inflatesome{\gamma}{c}{\psi}} 
    \\
    L_1^\prime & = \fullcover{\inflatesome{\gamma}{c}{\lambda_1}} 
    &
    L_1 & = L_1^\prime \setminus ( P ) 
    \\
    L_2^\prime & = \fullcover{\inflatesome{\gamma}{c}{\lambda_2}} 
    &
    L_2 & = L_2^\prime \setminus ( P \cup L_1^\prime ) 
    \\
    \vdots & \mathrel{\phantom{=}} \vdots
    &
    \vdots & \mathrel{\phantom{=}} \vdots
    \\
    L_k^\prime & = \fullcover{\inflatesome{\gamma}{c}{\lambda_k}} 
    &
    L_k & = L_k^\prime \setminus ( P \cup L_1^\prime \cup \ldots \cup L_{k-1}^\prime) 
    \\ 
    & &
    R & = [1, \pi) \setminus (P \cup L_1^\prime \cup \ldots \cup L_{k}^\prime)
    \end{align*}
    It is easy to see that in
    $\{ P, L_1, \ldots L_k, R\}$
    the intersection of any distinct pair of sets is empty, and that
    $
    [1, \pi)
    =
    P \cup L_1 \cup \ldots \cup L_k \cup R
    $.
    Thus 
    \begin{align}
    \label{equation-PMF-nice-sums}
    \mobp{\pi}
    =
    - \sum_{\tau \in P} \mobp{\tau}
    - \sum_{\tau \in L_1} \mobp{\tau}
    -
    \ldots
    - \sum_{\tau \in L_k} \mobp{\tau}
    - \sum_{\tau \in R} \mobp{\tau},    
    \end{align}
    and so to prove
    Theorem~\ref{theorem-PMF-strongly-zero-interval},
    it is sufficient to show that
    each of the sums in Equation~\ref{equation-PMF-nice-sums} 
    is zero.
    
    First consider $\sum_{\tau \in P} \mobp{\tau}$.
    $P$ is a closed interval, and so we have
    $\sum_{\tau \in P} \mobp{\tau} = 0$
    from the definition of the \mob function.

    Now consider
    $\sum_{\tau \in L_i} \mobp{\tau}$,
    with $i \in [1, k]$.
    Recall that
    $P 
    = \fullcover{\inflatesome{\gamma}{c}{\psi}} 
    = \permsetsome{\gamma}{c}{\psi}$,
    and
    $L_i^\prime 
    = \fullcover{\inflatesome{\gamma}{c}{\lambda_i}}
    = \permsetsome{\gamma}{c}{\lambda_i}$;
    and note that $L_i \subseteq L_i^\prime \setminus P$.    
    From this we can see that
    any permutation in $L_i^\prime \setminus P$ is
    in the set of permutations defined by
    \[
    M =
    \{ 
    \tau :
    \tau \in \permsetsome{\gamma}{c}{\upsilon},
    \upsilon \in ( \fullcover{\lambda_i} \setminus \fullcover{\psi} )
    \}.
    \]
    Since $\psi$ is the core of $\phi$, it follows that
    every permutation in $M$ is strongly zero.
    Now, since $L_i \subseteq L_i^\prime \setminus P$,
    we have that every permutation in $L_i$ 
    is strongly zero,
    and so
    $\sum_{\tau \in L_i} \mobp{\tau} = 0$.
        
    Finally, consider $\sum_{\tau \in R} \mobp{\tau}$.  
    Recall that $\pi = \inflatesome{\gamma}{c}{\phi}$. 
    We claim that every permutation in $R$ 
    contains an interval order-isomorphic to $\phi$.
    Any permutation $\alpha$ in $[1, \pi)$ is,
    by definition, contained in
    $\permsetsome{\gamma}{c}{\phi}$.
    It follows that $\alpha$ must be contained in 
    $
    \{
    \tau :  
    \tau \in \permsetfixed{\gamma}{c}{\upsilon},
    \upsilon \in \fullcover{\phi} 
    \}
    $.
    The permutations covered by $\phi$
    are $\psi, \lambda_1, \ldots, \lambda_k$,
    and so we have
    $\fullcover{\phi} = 
    \{ \phi  \} 
    \cup
    \fullcover{\psi}
    \cup
    \fullcover{\lambda_1}
    \cup
    \ldots
    \cup
    \fullcover{\lambda_k}
    $. 
    Since 
    $P 
    = \fullcover{\inflatesome{\gamma}{c}{\psi}} 
    =\permsetsome{\gamma}{c}{\psi}$,
    and
    $L_i
    = \fullcover{\inflatesome{\gamma}{c}{\lambda_i}} 
    =\permsetsome{\gamma}{c}{\lambda_i}$,
    it follows that 
    $
    R = \permsetfixed{\gamma}{c}{\phi}
    $,
    and so our claim is proved.
    Now, by the inductive hypothesis,
    we have that every permutation $\tau \in R$
    has $\mobp{\tau} = 0$,
    and so $\sum_{\tau \in R} \mobp{\tau} = 0$. 
\end{proof}
We note here that the set of nice permutations
is a subset of $\stronglyzeroset$, 
as a permutation $\pi$ with opposing adjacencies is 
in $\stronglyzeroset$, but $\pi$ may not be nice.
As an example, if $\pi=256143$,
then $\pi \in \stronglyzeroset$
since it has opposing adjacencies.
The permutations covered by $\pi$ are
$45132$, $25143$, $14532$ and $24513$.
Two of these, $25143$ and $24513$, are not strongly zero,
and so $\pi$ does not have a core, 
and therefore cannot be nice.

\subsection{Extending Theorem~\ref{theorem-PMF-strongly-zero-interval}}

It is natural to ask if we can extend  
Theorem~\ref{theorem-PMF-strongly-zero-interval}
to handle the case where the lower bound of the interval is not $1$.
In order to do so, we need some further definitions.

We define a \emph{$\sigma$-closure} of a permutation $\pi$,
written $\xfullcover{\sigma}{\pi}$,
to be the set of permutations contained in $\pi$
that also contain $\sigma$.  

Let $\stronglyzeroset_\sigma$ be the set of
permutations such that
if any permutation $\pi$ contains
an interval order-isomorphic 
to some $\tau \in \stronglyzeroset_\sigma$
then $\mobfn{\sigma}{\pi} = 0$.  

If $L$ is a set of permutations 
$\{\lambda_1, \ldots , \lambda_n\}$, then
the \emph{$\sigma$-ground} of $L$, $\xground{\sigma}{L}$,
is the set of permutations formed
by taking the $\sigma$-closure of each permutation in 
$\lambda_i$, and then removing any permutation
that is contained in $\stronglyzeroset_\sigma$.

We say that $\psi$ is a \emph{$\sigma$-core} of $\pi$
if every permutation in the $\sigma$-ground of $L$
is contained in $\psi$, i.e., 
$\xground{\sigma}{L} \subseteq \xfullcover{\sigma}{\psi}$.

Finally,
we say that $\pi$ is \emph{$\sigma$-nice}  
if
$\mobfn{\sigma}{\pi} = 0$ and 
$\pi$ has a $\sigma$-core.

We now have:
\begin{theorem}
    \label{theorem-MF-strongly-zero-interval}
    If $\phi$ is a $\sigma$-nice permutation,
    and $\pi$ is any permutation
    containing an interval order-isomorphic to $\phi$,
    then $\mobfn{\sigma}{\pi} = 0$.
\end{theorem}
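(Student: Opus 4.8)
The plan is to repeat the argument of Theorem~\ref{theorem-PMF-strongly-zero-interval} almost verbatim, replacing the lower bound $1$ throughout by $\sigma$, the closure $\fullcover{\cdot}$ by the $\sigma$-closure $\xfullcover{\sigma}{\cdot}$, the principal \mob function $\mobp{\cdot}$ by $\mobfn{\sigma}{\cdot}$, and the notions ``strongly zero'', ``ground'', ``core'' and ``nice'' by their $\sigma$-analogues $\stronglyzeroset_\sigma$, $\xground{\sigma}{\cdot}$, ``$\sigma$-core'' and ``$\sigma$-nice''. I would induct on $\order{\pi}$. If $\sigma \not\leq \pi$ then $\mobfn{\sigma}{\pi} = 0$ from the definition of the \mob function, so I may assume $\sigma \leq \pi$; and if $\pi = \phi$ then $\mobfn{\sigma}{\pi} = 0$ because $\phi$ is $\sigma$-nice. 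This disposes of the base case.

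For the inductive step, assume the statement for all permutations shorter than $n > \order{\phi}$, and let $\pi$ have length $n$ and contain an interval order-isomorphic to $\phi$. Fix such an interval, let $\gamma$ be obtained by contracting it to a single point, and let $c$ be the index of its first point, so $\pi = \inflatesome{\gamma}{c}{\phi}$. Writing $\psi$ for the $\sigma$-core of $\phi$ and $\lambda_1, \dots, \lambda_k$ for the remaining permutations in $\cover{\phi}\setminus\psi$, I would set
\[
P = \xfullcover{\sigma}{\inflatesome{\gamma}{c}{\psi}}, \qquad
L_i = \xfullcover{\sigma}{\inflatesome{\gamma}{c}{\lambda_i}} \setminus \Bigl( P \cup \bigcup_{j<i} \xfullcover{\sigma}{\inflatesome{\gamma}{c}{\lambda_j}} \Bigr), \qquad
R = [\sigma, \pi) \setminus \Bigl( P \cup \bigcup_{j} \xfullcover{\sigma}{\inflatesome{\gamma}{c}{\lambda_j}} \Bigr).
\]
Since $[\sigma, \pi) \subseteq [1, \pi)$, the disjoint-cover identity already established in Theorem~\ref{theorem-PMF-strongly-zero-interval} restricts to show that $P, L_1, \dots, L_k, R$ partition $[\sigma, \pi)$, giving $\mobfn{\sigma}{\pi} = -\sum_{\tau \in P}\mobfn{\sigma}{\tau} - \sum_i \sum_{\tau \in L_i}\mobfn{\sigma}{\tau} - \sum_{\tau \in R}\mobfn{\sigma}{\tau}$, and it suffices to show each sum vanishes.

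The three vanishing arguments mirror the principal case. The set $P$ is the $\sigma$-closed interval $[\sigma, \inflatesome{\gamma}{c}{\psi}]$, so $\sum_{\tau \in P}\mobfn{\sigma}{\tau} = 0$ directly from the defining recurrence (with the degenerate cases $\sigma \not< \inflatesome{\gamma}{c}{\psi}$ handled separately). For each $L_i$, every element arises by inflating position $c$ of $\gamma$ by some $\upsilon \in \xfullcover{\sigma}{\lambda_i} \setminus \xfullcover{\sigma}{\psi}$; because $\psi$ is the $\sigma$-core, the inclusion $\xground{\sigma}{\cover{\phi}\setminus\psi} \subseteq \xfullcover{\sigma}{\psi}$ forces every such $\upsilon$ into $\stronglyzeroset_\sigma$, so each $\tau \in L_i$ contains an interval order-isomorphic to an element of $\stronglyzeroset_\sigma$ and hence has $\mobfn{\sigma}{\tau} = 0$. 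Finally $R = \permsetfixed{\gamma}{c}{\phi} \cap [\sigma, \pi)$, so each $\tau \in R$ is a strictly shorter permutation containing an interval order-isomorphic to $\phi$, and the inductive hypothesis gives $\mobfn{\sigma}{\tau} = 0$.

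The step I expect to need the most care is the $L_i$ computation. Unlike the principal case, the $\sigma$-closures $\xfullcover{\sigma}{\lambda_i}$ and $\xfullcover{\sigma}{\psi}$ may be empty (when $\sigma$ is not contained in $\lambda_i$ or in $\psi$), and $\stronglyzeroset_\sigma$-membership is a property relative to the fixed lower bound $\sigma$ rather than an absolute one, so I would verify carefully that the definition of $\sigma$-core certifies $\xfullcover{\sigma}{\lambda_i}\setminus\xfullcover{\sigma}{\psi} \subseteq \stronglyzeroset_\sigma$ in exactly the way $\ground{\cdot}\subseteq\fullcover{\psi}$ did before, and that the degenerate situations (an empty $P$ or $L_i$, or $\sigma$ coinciding with $\inflatesome{\gamma}{c}{\psi}$) do not disturb the cancellation of the four sums. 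The remaining bookkeeping — that the partition of $[1,\pi)$ restricts cleanly to $[\sigma,\pi)$ and that $R$ consists exactly of the strictly shorter $\phi$-containing permutations — is routine once the principal-case identities are imported.
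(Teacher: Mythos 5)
Your proposal is correct and follows exactly the route the paper takes: the paper's own proof simply states that one repeats the argument of Theorem~\ref{theorem-PMF-strongly-zero-interval} with $1$, closure, ground, core and nice replaced by their $\sigma$-analogues, and omits the details. You have supplied precisely those details (including sensible attention to the degenerate cases the paper does not mention), so there is nothing further to add.
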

\begin{proof}
    The proof is follows the same pattern as
    Theorem~\ref{theorem-PMF-strongly-zero-interval},
    replacing
    $\stronglyzeroset$ by $\stronglyzeroset_\sigma$,
    closure by $\sigma$-closure,
    ground by $\sigma$-ground,
    core by $\sigma$-core,
    nice by $\sigma$-nice,
    and where the lower bound of 
    an interval is $1$, 
    replacing the lower bound by $\sigma$.
    We omit the details for brevity.    
\end{proof}

\section{The proportion of permutations with $\mobp{\pi}=0$}
\label{section-bounds-for-zn}

Let $Z(n)$ be the proportion of permutations of length $n$
where the principal \mob function is zero.
Let $Z_{sz}(n)$ be the proportion of permutations
of length $n$ that are strongly zero.
Plainly, $Z(n) \geq Z_{sz}(n)$ for all $n$.
Our aim in this section is to find an
asymptotic lower bound for
$Z(n)$ by determining an asymptotic 
lower bound for $Z_{sz}(n)$.
To find this lower bound,
we count inflations of simple permutations
where the resulting permutation
has opposing adjacencies, 
and so is structurally zero.
We use a result from Albert and Atkinson~\cite{Albert2005}:
\begin{proposition}[{%
    Albert and Atkinson~\cite[Proposition 2]{Albert2005}}]
    \label{AA-inflations-of-simples}
    Let $\pi$ be any permutation. 
    Then there is a unique simple permutation 
    $\sigma$,
    and permutations 
    $\alpha_1, \ldots, \alpha_k$
    such that
    $\pi = \sigma[\alpha_1, \ldots, \alpha_k]$.
    If $\sigma \neq 12,21$, then
    $\alpha_1, \ldots, \alpha_k$
    are also uniquely determined by $\pi$.
    If $\sigma = 12$ or $21$, then
    $\alpha_1, \alpha_2$
    are unique so long as we require that 
    $\alpha_1$ is sum indecomposable or skew indecomposable respectively.
\end{proposition}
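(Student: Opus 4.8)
The plan is to establish existence and uniqueness separately, taking the set of intervals of $\pi$ as the central object. The key tool is an \emph{overlap lemma}: if $A$ and $B$ are intervals of $\pi$ that overlap (meaning $A\cap B\neq\emptyset$ but neither contains the other), then $A\cap B$, $A\cup B$, $A\setminus B$ and $B\setminus A$ are all intervals of $\pi$ as well. This follows straight from the definition, since the position-sets and value-sets of $A$ and $B$ are each contiguous, and unions, intersections and differences of overlapping sets of contiguous integers are again contiguous. I would prove this first, as every later step leans on it.

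For existence I would branch on whether $\pi$ is \emph{sum decomposable} (its points split into a nonempty proper initial interval all of whose values lie below the remaining values) or \emph{skew decomposable} (the symmetric notion). If $\pi$ is sum decomposable I take $\sigma=12$, let $\alpha_1$ be the inclusion-minimal such initial block, which is automatically sum indecomposable, and let $\alpha_2$ be the remainder, so that $\pi=\inflateall{12}{\alpha_1,\alpha_2}$; the skew case is identical with $\sigma=21$. Note that this branch absorbs $\pi=12$ and $\pi=21$ themselves. If $\pi$ is neither, I claim the maximal proper intervals of $\pi$ are pairwise disjoint and partition its positions, and that collapsing each to a single point yields a simple quotient $\sigma$ (with $\sigma=\pi$ and all blocks singletons when $\pi$ is already simple). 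Disjointness is where the overlap lemma does its work: two overlapping maximal proper intervals would have an interval union that is either still proper, contradicting maximality, or is all of $\pi$, in which case the overlap lemma forces $\pi$ to be sum or skew decomposable, contradicting the case assumption. The quotient is simple because any nontrivial interval of it would pull back to a proper interval of $\pi$ strictly containing one of the maximal blocks. Taking $\alpha_i$ to be the subpermutation on the $i$-th block finishes existence.

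For uniqueness I would treat the two regimes dictated by the statement. When $\sigma\notin\{12,21\}$, I argue that the blocks $\alpha_i$ must be exactly the maximal proper intervals constructed above: simplicity of $\sigma$ means one can show any interval of $\pi$ is either contained in a single block or is a union of whole blocks forming an interval of $\sigma$, and the latter can only be trivial; hence the blocks, and therefore $\sigma$, are canonically determined by $\pi$ (trivially so when $\pi$ is a singleton). When $\sigma\in\{12,21\}$ the maximal proper intervals genuinely overlap, as already in $123$, which is precisely why the statement imposes the indecomposability condition; under it $\alpha_1$ is forced to be the unique first sum- (or skew-) indecomposable component of $\pi$, and $\alpha_2$ is then the rest. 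To see that $\sigma$ itself is unambiguous across all cases, I would use the fact that no permutation of length at least two is simultaneously sum and skew decomposable: comparing the two hypothetical split points quickly produces small values lying above large ones, a contradiction. I expect the main obstacle to be the existence argument in the non-decomposable case — specifically, pinning down cleanly, via the overlap lemma, that the maximal proper intervals partition the ground set and induce a simple quotient — since this is where the combinatorics of overlapping intervals must be handled with care.
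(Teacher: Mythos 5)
The paper does not prove this statement: it is imported verbatim as Proposition 2 of Albert and Atkinson, so there is no internal proof to compare against. Your argument is, in essence, the standard proof from that source (and from the general theory of substitution decompositions): the overlap lemma for intervals, disjointness of maximal proper intervals in the non-(sum/skew)-decomposable case, simplicity of the quotient, and the special treatment of $\sigma=12,21$ via the minimal sum- or skew-indecomposable initial component. The sketch is correct. The two places that deserve the most care when written out are exactly the ones you flag: (i) that two overlapping maximal proper intervals force $\pi$ to be sum or skew decomposable (this works because $A\setminus B$ and $B$ are then complementary intervals splitting both the positions and the values of $\pi$ into two contiguous ranges), and (ii) the uniqueness claim that in $\pi=\sigma[\alpha_1,\ldots,\alpha_k]$ with $\sigma$ simple of length at least four, every proper interval of $\pi$ is contained in a single block --- your phrasing ``or is a union of whole blocks forming an interval of $\sigma$'' should be sharpened to note that the trace of any interval of $\pi$ on $\sigma$ is itself an interval of $\sigma$, hence trivial by simplicity, which also rules out an inflation of a long simple $\sigma$ being simultaneously sum or skew decomposable. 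With those details filled in, the proof is complete.
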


We will also need a 
result from Albert, Atkinson and Klazar~\cite{Albert2003}:
\begin{theorem}[{%
    Albert, Atkinson and Klazar~\cite[Theorem 5]{Albert2003}}]
    \label{AAK-number-of-simple-permutations}
    The number of simple permutations of length $n$,
    $S(n)$,
    is given by
    \[
    S(n) 
    =
    \dfrac{n!}{\e^2}
    \left(
    1 - \dfrac{1}{n} + \dfrac{2}{n(n-1)} +O(n^{-3})
    \right).
    \]
\end{theorem}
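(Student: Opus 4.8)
The plan is to reduce the enumeration to the substitution (inflation) decomposition guaranteed by Proposition~\ref{AA-inflations-of-simples}, encode that decomposition as a functional equation for generating functions, invert it to isolate the simple permutations, and finally extract coefficient asymptotics. Throughout I would work with the formal power series $f(x) = \sum_{n\ge1} n!\,x^n$ counting all nonempty permutations and $\Sigma(y) = \sum_{k\ge4} S(k)\,y^k$ counting simple permutations of length at least four. Nothing here requires convergence: $f$ is a divergent series, but every coefficient I extract is a finite sum, and all identities below are formal power series identities checked coefficient by coefficient.

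First I would set up the functional equation. By the uniqueness in Proposition~\ref{AA-inflations-of-simples}, every permutation of length at least two is exactly one of: sum-decomposable, skew-decomposable, or an inflation of a unique simple permutation of length at least four. A nonempty permutation is either sum-indecomposable or splits off a unique sum-indecomposable initial block; writing $g$ for the generating function of sum-indecomposable permutations this gives $f = g + gf$, so $g = f/(1+f)$, and hence the sum-decomposable permutations contribute $gf = f^2/(1+f)$. By the reverse symmetry the skew-decomposable ones contribute the same, while the inflations of simple permutations of length at least four contribute $\Sigma(f)$, each counted exactly once. Collecting the singleton and these pieces yields
\[
f = x + \frac{2f^2}{1+f} + \Sigma(f).
\]

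Next I would solve for $\Sigma$. Since $f(x) = x + 2x^2 + \cdots$ has a compositional inverse $\chi := f^{-1}$, substituting $y = f(x)$ gives $\Sigma(y) = y - \chi(y) - 2y^2/(1+y)$. As $[y^n]\,2y^2/(1+y) = 2(-1)^n$ for $n \ge 2$, this reads $S(n) = -[y^n]\chi(y) - 2(-1)^n$ for all $n \ge 4$. Writing $f(x) = x\,F(x)$ with $F(x) = f(x)/x = \sum_{k\ge0}(k+1)!\,x^k$ and applying Lagrange inversion to $x = y/F(x)$ gives
\[
[y^n]\chi(y) = \frac1n\,[x^{n-1}]\,F(x)^{-n},
\]
so the whole problem is reduced to the asymptotics of $[x^{n-1}]F(x)^{-n}$.

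The hard part is this last extraction. Writing $F = 1 + B$ with $B(x) = \sum_{k\ge1}(k+1)!\,x^k$, the binomial series gives $[x^{n-1}]F^{-n} = \sum_{j\ge1}(-1)^j\binom{n+j-1}{j}[x^{n-1}]B(x)^j$, where $[x^{n-1}]B^j = \sum_{k_1+\cdots+k_j=n-1}\prod_{i=1}^{j}(k_i+1)!$ runs over compositions of $n-1$ into $j$ positive parts. The dominant compositions take $j-1$ parts equal to $1$ and one part equal to $n-j$, contributing $j\cdot 2^{j-1}(n-j+1)!$; with $\binom{n+j-1}{j}\sim n^j/j!$ and $(n-j+1)!\sim n!/n^{\,j-1}$, the $j$-th term of $\frac1n[x^{n-1}]F^{-n}$ is asymptotically $(-1)^j\tfrac{2^{j-1}}{(j-1)!}n!$, and summing over $j$ yields $n!\sum_{j\ge1}(-1)^j\tfrac{2^{j-1}}{(j-1)!} = -n!\sum_{m\ge0}\tfrac{(-2)^m}{m!} = -n!\,\e^{-2}$, so $S(n)\sim n!/\e^2$. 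To reach the stated expansion $1 - \tfrac1n + \tfrac{2}{n(n-1)} + O(n^{-3})$ I would carry two further orders everywhere: the subleading compositions of $B^j$ (a second part exceeding $1$, or the large part shrunk), the lower-order corrections in $\binom{n+j-1}{j}$ and in the ratio $(n-j+1)!/n^{\,j-1}$, together with the explicit $-2(-1)^n$ term. The \emph{main obstacle} is controlling these corrections uniformly in $j$ and bounding the tail of the $j$-sum; I expect the cleanest route is to factor out $n!$ and express each contribution as a convergent series in $1/n$ whose coefficients are exponential-type sums that collapse to the claimed rational functions of $n$.
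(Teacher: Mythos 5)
First, a point of reference: the paper does not prove this theorem at all --- it is quoted from Albert, Atkinson and Klazar~\cite{Albert2003} and used only through its leading term $S(n)\sim n!/\e^2$. So there is no internal proof to compare against; the relevant comparison is with the original source, and your plan is in fact a faithful reconstruction of that proof: the same trichotomy (sum-decomposable, skew-decomposable, or a proper inflation of a unique simple permutation of length at least four) coming from Proposition~\ref{AA-inflations-of-simples}, the same functional equation $f = x + 2f^2/(1+f) + \Sigma(f)$, the same inversion $\Sigma(y)=y-2y^2/(1+y)-\chi(y)$, and the same Lagrange-inversion reduction to $\tfrac{1}{n}[x^{n-1}]F(x)^{-n}$. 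Your leading-order extraction, giving $S(n)\sim n!/\e^2$, is correct.

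Two issues remain. The substantive one is that the content of the theorem beyond the leading term --- the $1/n$ and $1/(n(n-1))$ corrections and the $O(n^{-3})$ error --- is exactly the part you defer (``I would carry two further orders everywhere''), and that deferral hides essentially all of the work: you must account not only for the compositions with one large part and the rest equal to $1$, but also for those with an additional part equal to $2$ (these enter at relative order $1/n$, contributing $j(j-1)\cdot 3!\cdot 2^{j-2}(n-j)!$ to $[x^{n-1}]B^j$), combine them with the $1/n$ corrections to $\binom{n+j-1}{j}$ and to $(n-j+1)!\,n^{j-1}/n!$, and justify interchanging these expansions with the sum over $j$ (which is finite, $j\le n-1$, and whose tail is controlled by the factorial decay of the large part, so the uniformity worry you raise is real but routine). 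The second issue is that if you do carry the computation to the next order you will not obtain the displayed coefficient: the one-large-part terms contribute $+2/n$ and the extra-part-equal-to-$2$ terms contribute $-6/n$, so the expansion begins $1-\tfrac{4}{n}+\cdots$, which is what the theorem of~\cite{Albert2003} actually asserts and what the paper's own Table~\ref{table-compare-sn-vs-nfact-div-esquared} confirms numerically (for instance $S(10)\,\e^2/10!\approx 0.608$ against $1-\tfrac{4}{10}+\tfrac{2}{90}\approx 0.622$, versus $0.922$ for the printed version). The statement as transcribed here, with $-\tfrac{1}{n}$, is a misprint and cannot be proved as written; this does not affect the rest of the paper, which uses only $S(n)\sim n!/\e^2$.
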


We will prove:
\begin{theorem}
    \label{theorem-percentage-PMF-zero}
    $Z(n)$ is, asymptotically,
    bounded below by \zpmfr.
\end{theorem}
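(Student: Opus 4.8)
The plan is to bound $Z(n)$ from below by the proportion of length-$n$ permutations that have opposing adjacencies, since by Theorem~\ref{theorem-PMF-opposing-adjacencies} every such permutation $\pi$ has $\mobp{\pi}=0$. Writing $W(n)$ for the number of permutations of length $n$ with opposing adjacencies, it therefore suffices to show that $W(n)/n!$ is asymptotically at least $\zpmfr$. I would compute this proportion by inclusion--exclusion on the two ``bad'' events: a permutation \emph{fails} to have opposing adjacencies exactly when it has no up-adjacency or no down-adjacency, so
\[
\frac{W(n)}{n!} = 1 - \frac{2\,Q(n)}{n!} + \frac{R(n)}{n!},
\]
where $Q(n)$ counts permutations with no up-adjacency (equal, by the reverse symmetry, to the count with no down-adjacency) and $R(n)$ counts the adjacency-free permutations, which form the intersection of the two bad events.

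The structural input comes from Proposition~\ref{AA-inflations-of-simples}: writing $\pi=\inflateall{\sigma}{\alpha_1,\ldots,\alpha_k}$ with $\sigma$ simple, a short argument shows that when $\order{\sigma}\geq 4$ no adjacency of $\pi$ can straddle two inflation blocks, since such a straddling length-$2$ interval would force an adjacency in the simple permutation $\sigma$, which is impossible. Hence for these skeletons the adjacencies of $\pi$ are exactly those occurring inside the blocks $\alpha_i$, and $\pi$ has opposing adjacencies precisely when the blocks collectively supply both an up- and a down-adjacency. Summing the resulting per-skeleton inclusion--exclusion over all simple $\sigma$ (noting $S(3)=0$, and that the sum- or skew-decomposable skeletons $12,21$ contribute only an asymptotically negligible proportion, since the proportion of sum- or skew-decomposable permutations tends to $0$) reduces the problem to the two classical succession counts. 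Each of these is itself an inclusion--exclusion over forced adjacencies: forcing $j$ up-adjacencies gives $Q(n)=\sum_{j}(-1)^j\binom{n-1}{j}(n-j)!$, so that $Q(n)/n!\to \e^{-1}$, while the analogous signed count over forced adjacencies of either orientation yields $R(n)/n!\to \e^{-2}$. The estimate $S(n)\sim n!/\e^{2}$ from Theorem~\ref{AAK-number-of-simple-permutations} is exactly what is needed to control the skeleton sum and to corroborate the $\e^{-2}$ term, the adjacency-free density and the simple-permutation density sharing the limit $\e^{-2}$.

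Combining these limits gives
\[
\frac{W(n)}{n!} \longrightarrow 1 - 2\e^{-1} + \e^{-2} = \left(1 - \e^{-1}\right)^{2} \approx 0.39958,
\]
which strictly exceeds $\zpmfr$. Consequently, for all sufficiently large $n$ we have $Z(n)\geq W(n)/n! \geq \zpmfr$, establishing the asymptotic lower bound. I expect the main obstacle to be the asymptotic bookkeeping rather than any single clever step: one must verify carefully that the contributions of the decomposable skeletons and of the lower-order error terms in both succession counts (and in the estimate for $S(n)$) are negligible, and that the limiting constant $\left(1-\e^{-1}\right)^{2}$ lies strictly above $\zpmfr$, so that the ``asymptotically bounded below'' conclusion is unaffected by whether the proportion approaches its limit from above or below.
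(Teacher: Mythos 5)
Your argument is correct, but it takes a genuinely different route from the paper's. The paper bounds $Z_{sz}(n)$ below by counting, for each simple permutation of length $n-k$, the $\binom{n-k}{k}(2^k-2)$ inflations by $k$ adjacencies that yield opposing adjacencies (with uniqueness supplied by Proposition~\ref{AA-inflations-of-simples}), and then evaluates the series $\frac{1}{n!}\sum_{k}S(n-k)\binom{n-k}{k}(2^k-2)$ numerically term by term, truncating at $k=9$. You instead compute the exact asymptotic density of permutations with opposing adjacencies directly: inclusion--exclusion on the events ``no up-adjacency'' and ``no down-adjacency'' gives $W(n)/n! = 1 - 2Q(n)/n! + R(n)/n!$, and the classical succession counts give $Q(n)/n!\to\e^{-1}$ and $R(n)/n!\to\e^{-2}$ (the latter is Kaplansky's result, which the paper itself cites in a closing remark), whence $W(n)/n!\to(1-\e^{-1})^{2}=0.399576\ldots>\zpmfr$. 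This is cleaner, and it explains why the paper's $100$-term evaluation stabilises at $0.3995764008$: that number is exactly $(1-\e^{-1})^{2}$. Two remarks. First, your middle paragraph invoking Proposition~\ref{AA-inflations-of-simples} and the block structure of adjacencies is redundant for your line of argument: the formulas for $Q(n)$ and $R(n)$ come straight from inclusion--exclusion over forced bonds, with no need to pass through simple skeletons, so you should either delete that digression or commit fully to the paper's skeleton-counting route. Second, what the paper's approach buys is a sequence of explicit finite lower bounds (each truncation is already a valid asymptotic bound) and reuse of the simple-permutation machinery appearing elsewhere in the paper; what yours buys is the limiting constant in closed form.
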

\begin{proof}
    We find a lower bound for $Z_{sz}(n)$
    by counting 
    permutations that have
    opposing adjacencies.
    
    Let $n \geq 6$ be an integer;
    and let $k$ be an integer in the
    range $2, \ldots, \lfloor n/2 \rfloor$.
    Let $\sigma$ be a simple permutation with length $n-k$.
    We will count the number of ways 
    we can inflate $\sigma$ with $k$ adjacencies
    to obtain a permutation with length $n$
    that has opposing adjacencies.
    We can choose the positions to inflate in 
    $\binom{n-k}{k}$ ways.  There are
    $2^k$ distinct inflations by adjacencies, 
    and all but two have opposing adjacencies,
    thus the number of ways to inflate $\sigma$
    that result in a permutation 
    with opposing adjacencies is given by
    \[
    \binom{n-k}{k} (2^k - 2).    
    \]
    Since we are inflating
    simple permutations, it follows from
    Proposition~\ref{AA-inflations-of-simples}
    that the inflations are unique.
    
    For an inflation to contain an opposing
    adjacency, we need to inflate at least two points.
    Further, to obtain a permutation
    of length $n$ by inflating with adjacencies
    we can, at most, inflate $\lfloor n/2 \rfloor$ positions.
    Using Theorem~\ref{AAK-number-of-simple-permutations}, 
    we can say that  
    \[
    Z_{sz}(n) 
    \geq
    \dfrac{1}{n!}
    \sum_{k=2}^{\lfloor n/2 \rfloor}
    S(n-k)
    \binom{n-k}{k} (2^k - 2).
    \]
    
    Note now that as $n \to \infty$, $S(n) \to \frac{n!}{\e^2}$.  
    Let $P(n,k)$
    be the proportion of permutations of length $n$
    which are 
    inflations of simple permutations of length $n-k$,
    where $k$ positions are inflated by an adjacency,
    and the resulting permutation has 
    at least one opposing adjacency.    
    Then we have
    \begin{align*}
    \lim_{n \to \infty} P(n, 2) & =  
    \lim_{n \to \infty}
    \dfrac{1}{n!}
    \dfrac{(n-2)!}{\e^2} 
    \binom{n-2}{2} (2^2 - 2) \\
    & = 
    \lim_{n \to \infty}
    \dfrac{1}{\e^2} 
    \dfrac{(n-2)(n-3)}{n (n-1)} \\
    & = \dfrac{1}{\e^2}. 
    \end{align*}
    Similarly, we have
    \begin{align*}
    \lim_{n \to \infty} P(n, 3) & = \dfrac{1}{\e^2},  &
    \lim_{n \to \infty} P(n, 4) & = \dfrac{7}{12 \e^2},  \\
    \lim_{n \to \infty} P(n, 5) & = \dfrac{1}{4 \e^2},  &
    \lim_{n \to \infty} P(n, 6) & = \dfrac{31}{360 \e^2},  \\
    \lim_{n \to \infty} P(n, 7) & = \dfrac{1}{40 \e^2},  &
    \lim_{n \to \infty} P(n, 8) & = \dfrac{127}{20160 \e^2},  \\
    & \text{and} &
    \lim_{n \to \infty} P(n, 9) & = \dfrac{17}{12096 \e^2}.
    \end{align*}
    
    We now write
    \begin{align*}
    \lim_{n \to \infty}
    Z_{sz}(n) 
    & \geq
    \lim_{n \to \infty}    
    \dfrac{1}{n!}
    \sum_{k=2}^{\lfloor n/2 \rfloor}
    S(n-k)
    \binom{n-k}{k} (2^k - 2) \\
    & \geq
    \lim_{n \to \infty}    
    \dfrac{1}{n!}
    \sum_{k=2}^{9}
    S(n-k)
    \binom{n-k}{k} (2^k - 2) \\
    & = 
    \sum_{k=2}^{9}
    P(n, k) \\
    & = 0.3995299850
    \end{align*}
    and thus
    $Z(n)$ is, asymptotically,
    bounded below by \zpmfr.
\end{proof}
We used the first nine terms of the sum
$
\frac{1}{n!}
\sum_{k=2}^{\lfloor n/2 \rfloor}
S(n-k)
\binom{n-k}{k} (2^k - 2) 
$
as this gives us a lower bound for 
$Z_{sz}(n)$ to four significant figures.
We found that evaluating the first 100
terms
improves the lower bound to
$
0.3995764008
$,
and evaluating larger number of terms
makes no difference, to ten significant figures,
to the value obtained.
\begin{remark}
    Kaplansky~\cite{Kaplansky1945}
    provides an asymptotic expression for the
    probability that a permutation of
    length $n$ will have $k$ adjacencies.
    Corteel, Louchard and Pemantle~\cite{Corteel}
    show that the distribution is Poisson, 
    with parameter 2.  
    It is possible to find a lower bound for
    $Z(n)$ using a probabilistic argument based on
    these results.
    We found that, to four significant figures,
    the lower bound from this approach
    was still \zpmfr,
    so using this more general construction
    did not improve
    our lower bound. 
\end{remark}

\section{Concluding remarks}

\subsection{Permutations with non-opposing adjacencies}

Given Theorem~\ref{theorem-PMF-opposing-adjacencies},
it is natural to wonder if we can find a  
similar result that applies 
where a permutation has multiple adjacencies, 
but no opposing adjacencies.

We can find permutations that have 
multiple adjacencies, and do not
have
opposing adjacencies, where 
the principal \mob function value 
is non-zero.  
Table~\ref{table-count-of-non-opp-adj-permutations}
shows, for lengths $4 \ldots 12$, the number of 
permutations with multiple non-opposing adjacencies
broken down by whether
the value of the 
principal \mob function is zero or not.
\begin{table}[!h]
\[
\begin{array}{lrr}
\toprule
\text{Length} 
&  =0 & \neq 0 \\
\midrule
 4 &        6 &        4 \\
 5 &       26 &        8 \\
 6 &      170 &       38 \\
 7 &     1154 &      212 \\
 8 &     8954 &     1502 \\
 9 &    78006 &    13088 \\
10 &   757966 &   130066 \\
11 &  8132206 &  1436296 \\
12 & 95463532 & 17403612 \\
\bottomrule
\end{array}
\]
\caption{Number of permutations with non-opposing adjacencies,
    classified by the value of the 
    principal \mob function.}
\label{table-count-of-non-opp-adj-permutations}
\end{table}

This suggests that it might be possible to find
a result similar to
Theorem~\ref{theorem-PMF-opposing-adjacencies}
for some or all of these cases, although
any such result will clearly need some 
additional criteria that will exclude
permutations that have a non-zero
principal \mob function value.

\subsection{The number of strongly zero permutations}

In this paper we show that if a permutation is nice,
then it is strongly zero.  We also show that
permutations with opposing adjacencies are strongly zero.
There may be permutations that do not contain opposing adjacencies, 
and which are not nice, but are, nevertheless,
strongly zero.

We place the strongly zero permutations we can identify
into one of two categories:
\emph{obviously zero} permutations, 
which are those that contain
either an opposing adjacency, or
an interval that is order-isomorphic to a smaller
nice
permutation; 
and
\emph{new} permutations, 
which are those that are nice,
but not obviously zero.
As an example, $1243$ is obviously zero,
since it contains opposing adjacencies,
whereas
$12453$ is new.
The number of permutations for lengths 
3 to 10
in each of the above groups are shown in
Table~\ref{table-known-strongly-zero-permutations-3-to-10}.
\begin{table}[h!]
    \[
    \begin{array}{rrrrr}
    \toprule 
    \text{Length} &
    \text{Obviously zero} &
    \text{New} &
    \text{Obviously zero \%} &
    \text{New \%}  \\
    \midrule
    3  &      0 &     2 &  0.00 & 33.33 \\
    4  &     10 &     0 & 41.67 &  0.00 \\ 
    5  &     40 &    10 & 33.33 &  8.33 \\
    6  &    258 &    16 & 35.83 &  2.22 \\
    7  &   1570 &   144 & 31.15 &  2.86 \\
    8  &  11366 &   816 & 28.19 &  2.02 \\
    9  &  91254 &  6144 & 25.15 &  1.69 \\
    10 & 817506 & 50664 & 22.53 &  1.40 \\ 
    \bottomrule
    \end{array}
    \]
    \caption{Number of known strongly zero permutations with length $3, \ldots, 10$.}
    \label{table-known-strongly-zero-permutations-3-to-10}
\end{table}

The figures 
in Table~\ref{table-known-strongly-zero-permutations-3-to-10}
suggest that the number of 
permutations that are strongly zero grows as $n$ 
increases, which is what we would naturally expect.
There is also a suggestion,
on the basis of the limited numerical evidence,
that the proportion of permutations of length $n$
that are strongly zero is falling as $n$ increases.
We know, however, from the proof of 
Theorem~\ref{theorem-percentage-PMF-zero},
that this proportion is,
asymptotically, bounded below by
\zpmfr.  

We suggest a factor that might explain
this apparent contradiction. 
Our proof of 
Theorem~\ref{theorem-percentage-PMF-zero}
uses $\frac{n!}{\e^2}$ as the number of simple permutations
of length $n$.  
Table~\ref{table-compare-sn-vs-nfact-div-esquared}
compares the actual values of $S(n)$ against
the computed value of $\frac{n!}{\e^2}$
for $n = 4, \ldots, 10$, 
and, as can be seen, for 
these small values of $n$
there is a significant difference between the two values.
\begin{table}[!h]
    \[
    \begin{array}{lrrrrrrr}
    \toprule
    \text{n}         & 4 & 5& 6 & 7 & 8 & 9 & 10 \\
    \midrule
    S(n)             & 2 & 6 & 46 & 338 & 2926 & 28146 & 298526 \\
    \midrule
    \dfrac{n!}{\e^2} & 3.2 & 16.2 & 97.4 & 682 & 5456 & 49110 & 491104 \\
    \bottomrule
    \end{array}
    \]
    \caption{A comparison of $S(n)$ and $\dfrac{n!}{\e^2}$ for $n=4, \ldots, 10$.}
    \label{table-compare-sn-vs-nfact-div-esquared}
\end{table}
It is clear that the figures 
in Table~\ref{table-known-strongly-zero-permutations-3-to-10}
do not represent the asymptotic behaviour.

We say that a permutation $\pi$ is \emph{canonical}
if there is no symmetry of $\pi$
that is lexicographically smaller.
As an example, 
$125634$ is canonical,
as the other symmetries of this permutation are
$341256$,
$436521$, and
$652143$.
A file containing all of the known
canonical strongly zero permutations with 
length less than or equal to ten
is available 
from the second author.

\subsection{The asymptotic behaviour of $Z(n)$}

It is natural to wonder what the asymptotic behaviour of
$Z(n)$ is.
Based on numeric evidence supplied by 
Jason Smith~\cite{Smith2018}
for $1 \leq n \leq 9$, and calculations 
performed by the second author,
Table~\ref{table-zn-one-to-twelve} shows
the value of $Z(n)$ for $n = 1, \ldots, 12$.
\begin{table}[!h]
\[
\begin{array}{lr}
\toprule
\text{Length} & 
Z(n) \\
\midrule
 1 & 0.0000 \\ 
 2 & 0.0000 \\
 3 & 0.3333 \\
 4 & 0.4167 \\ 
 5 & 0.4833 \\
 6 & 0.5361 \\
 7 & 0.5742 \\
 8 & 0.5942 \\
 9 & 0.6019 \\
10 & 0.6040 \\
11 & 0.6034 \\
12 & 0.6021 \\
\bottomrule
\end{array}
\]
\caption{The value of $Z(n)$ for $n = 1, \ldots, 12$.}
\label{table-zn-one-to-twelve}
\end{table}

Based on this somewhat limited numeric evidence,
we conjecture that:
\begin{conjecture}
    \label{conjecture-PMF-zero-61}
    The proportion of permutations that have
    principal \mob function value
    equal to zero is
    bounded above by 0.6040.
\end{conjecture}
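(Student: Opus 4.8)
The plan is to prove the \emph{dual} bound. Since $Z(n)$ is a proportion, the conjecture is equivalent to showing that the proportion of length-$n$ permutations with $\mobp{\pi} \neq 0$ is at least $1 - 0.6040 = 0.3960$ for every $n$, with the small cases $n \leq 12$ already covered by Table~\ref{table-zn-one-to-twelve}, so that the real content is asymptotic. Whereas the lower bound of Theorem~\ref{theorem-percentage-PMF-zero} is obtained by exhibiting a dense family that is provably \emph{zero} (inflations producing opposing adjacencies), here I would exhibit a dense family that is provably \emph{non-zero}, and estimate its density from below using $S(n) \sim n!/\e^2$ exactly as in the proof of Theorem~\ref{theorem-percentage-PMF-zero}.

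The natural candidate family consists of the simple permutations together with the single-adjacency inflations of simples, that is, permutations of the form $\inflatesome{\sigma}{c}{12}$ and $\inflatesome{\sigma}{c}{21}$ with $\sigma$ simple. By Proposition~\ref{AA-inflations-of-simples} these are pairwise distinct, and for $\order{\sigma} \geq 4$ each simple is adjacency-free, so such a single-adjacency inflation has exactly one adjacency and therefore contains neither a triple adjacency nor opposing adjacencies; by Lemma~\ref{Smith-lemma-triple-adjacencies} and Theorem~\ref{theorem-PMF-opposing-adjacencies} these permutations are precisely the ones \emph{not} forced to be zero, making them the right place to look for non-zero values. Counting as in Theorem~\ref{theorem-percentage-PMF-zero} gives asymptotic densities $\tfrac{1}{\e^2}$ for the simples and $\tfrac{2}{\e^2}$ for the single-adjacency inflations, for a combined density tending to $\tfrac{3}{\e^2} \approx 0.4060$. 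This already exceeds the target $0.3960$, so the bound would follow \emph{provided} all but a small fraction of this family has non-zero principal \mob function.

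To evaluate the \mob function on this family I would combine the available machinery: the result of Brignall and Marchant~\cite{Brignall2017a} that, because the lower bound $1$ is indecomposable, $\mobp{\pi}$ depends only on the indecomposable permutations contained in $\pi$; the recursions of Burstein, Jel{\'{i}}nek, Jel{\'{i}}nkov{\'{a}} and Steingr{\'{i}}msson~\cite{Burstein2011} for sum- and skew-decomposable permutations; and Smith's formulas~\cite{Smith2016,Smith2016a}. The aim is a workable lower bound on the number of simples, and of single-adjacency inflations, with $\mobp{\pi} \neq 0$. Writing $f_s$ and $f_a$ for the asymptotic proportions of \emph{zero}-valued permutations within the two sub-families, the argument closes as soon as $f_s + 2 f_a \leq 3 - 0.3960\,\e^2 \approx 0.075$, so it suffices to show that zero values are rare inside each sub-family and to make that rarity quantitative.

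The hard part will be exactly this exception control. There is no known clean sufficient condition for $\mobp{\pi} \neq 0$ mirroring the opposing-adjacencies condition for zero, and in a single-adjacency inflation $\inflatesome{\sigma}{c}{12}$ the value $\mobp{\pi}$ is \emph{not} determined locally: the entire down-set $[1,\pi)$ must be understood, and the inflation interacts with the global descent and indecomposability structure of $\sigma$. Even establishing that a positive-density portion of these permutations is non-zero therefore seems to require a new recursion tracking the simple skeleton, and the thin margin ($0.4060$ against the required $0.3960$) forces the estimate to be genuinely quantitative rather than merely an assertion that the density is positive. A secondary obstacle is that the conjecture is a bound for \emph{all} $n$: absent a proof that $Z(n)$ converges to a limit below $0.6040$ together with control of the approach, one cannot rule out a later overshoot from the density estimate alone, so the asymptotic argument would have to be supplemented by a uniform, effective version valid beyond the range of Table~\ref{table-zn-one-to-twelve}.
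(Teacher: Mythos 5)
First, note that the statement you are proving is presented in the paper as a \emph{conjecture}: the authors offer no proof, only the numerical evidence of Table~\ref{table-zn-one-to-twelve} for $n \le 12$, together with an explicit caveat that the data may not reflect asymptotic behaviour. So there is no ``paper proof'' to match your argument against, and your proposal should be judged as a free-standing attempt.

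As such, it is a reasonable program but not a proof, and you have correctly located where it breaks. The counting half is fine: simples together with their single-adjacency inflations do have asymptotic density $3/\e^2 \approx 0.4060$, and by Proposition~\ref{AA-inflations-of-simples} the members are distinct. The genuine gap is the claim that all but a tiny fraction of this family has $\mobp{\pi} \neq 0$. Nothing in the paper, nor in the cited machinery, gives a lower bound on the number of \emph{non-zero} values in any positive-density family; all of the available local criteria (Lemma~\ref{Smith-lemma-triple-adjacencies}, Theorem~\ref{theorem-PMF-opposing-adjacencies}, Theorem~\ref{theorem-PMF-strongly-zero-interval}) run in the opposite direction, certifying zeros. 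Escaping those criteria does not make a permutation non-zero: for instance $\inflatesome{\sigma}{1}{12}$ begins with an up-adjacency and so is zero by the Burstein--Jel\'{\i}nek--Jel\'{\i}nkov\'{a}--Steingr\'{\i}msson result, and more importantly there is no result here asserting that simple permutations themselves are generically non-zero. Since the margin is $3/\e^2 \approx 0.4060$ against a required $0.3960$, you would need to prove that at most about $2.5\%$ of the family is zero-valued, a sharp quantitative statement for which no tool is supplied. Finally, even granting the asymptotic density claim, you would only obtain $\limsup_n Z(n) \le 0.6040$, whereas the conjecture as stated is a bound for every $n$; the data in Table~\ref{table-zn-one-to-twelve} covers $n \le 12$ but nothing controls the intermediate range. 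The statement therefore remains open, and your write-up is best read as identifying the obstacles rather than overcoming them.
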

\begin{remark}
In our exploration of the \mob function,
we have noted that the behaviour of the function
can be erratic where the length of the permutation
is small, 
and Conjecture~\ref{conjecture-PMF-zero-61} 
may not, therefore, reflect
the asymptotic behaviour. 
\end{remark}

\paragraph*{Acknowledgements}

The computations in 
Section~\ref{section-bounds-for-zn} 
were performed using
Maple\texttrademark~\cite{Maple2016}.

\bibliographystyle{abbrv} 
\bibliography{../Bibliography}  
\end{document}